\documentclass{article}
\usepackage{amsmath}
\usepackage{amssymb}
\usepackage{amsthm}
\usepackage{algorithm}
\usepackage{algorithmic}
\usepackage{graphicx}
\usepackage{color}
\usepackage{lscape}
\def\@themcountersep{}
\setlength{\evensidemargin}{0 in}
\setlength{\oddsidemargin}{0 in}
\setlength{\textwidth}{6.25in}
\setlength{\textheight}{8.5in}
\setlength{\topmargin}{0 in}
\setlength{\headheight}{0 in}
\setlength{\headsep}{0 in}
\setlength{\itemsep}{-\parsep}

\parskip10pt
\parindent0pt
\newtheorem{theorem}{Theorem}[section]

\newtheorem{corollary}[theorem]{Corollary}
\newtheorem{definition}[theorem]{Definition}

\newtheorem{lemma}[theorem]{Lemma}
\newtheorem{proposition}[theorem]{Proposition}


\def\ba{\begin{array}}
\def\ea{\end{array}}
\def\beq{\begin{equation}}
\def\eeq{\end{equation}}
\def\beann{\begin{eqnarray*} }
\def\eeann{\end{eqnarray*}}
\def\bc{\begin{center}}
\def\ec{\end{center}}
\def\bea{\begin{eqnarray}}
\def\eea{\end{eqnarray}}

\def\lag{\langle}
\def\rag{\rangle}


\def\com{{\rm com}\,}

\def\conv{{\rm conv}\,}

\def\Diag{{\rm Diag}\,}

\def\inter{{\rm int}\,}

\def\Tr{{\rm Tr}\,}






\begin{document}


\title{LP-based Tractable Subcones of the Semidefinite Plus Nonnegative Cone\thanks{The authors thank one of the anonymous reviewers for suggesting the title, which had previously been``Tractable Subcones and LP-based Algorithms for Testing Copositivity.'' 
This research was supported by the Japan Society for the Promotion of Science through a Grant-in-Aid for Scientific Research ((B)23310099) of the Ministry of Education, Culture, Sports, Science and Technology of Japan.}
}

\author{Akihiro Tanaka\thanks{
Graduate School of Systems and Information Engineering, University of Tsukuba, Tsukuba, Ibaraki 305-8573, Japan. email: a-tanaka@criepi.denken.or.jp
}   
 and 
Akiko Yoshise\thanks{Corresponding author. Faculty of Engineering, Information and Systems, University of Tsukuba, Tsukuba, Ibaraki 305-8573, Japan. email: yoshise@sk.tsukuba.ac.jp
}     
}

\date{December 2015 \\ Revised October 2017}

\maketitle

\begin{abstract}
The authors in a previous paper devised certain subcones of the semidefinite plus nonnegative cone and showed that satisfaction of the requirements for membership of those subcones can be detected by solving linear optimization problems (LPs) with $O(n)$ variables and $O(n^2)$ constraints. 
They also devised LP-based algorithms for testing copositivity using the subcones.
 In this paper, they investigate the properties of the subcones in more detail and explore larger subcones of the  positive semidefinite plus nonnegative cone whose satisfaction of the requirements for membership can be detected by solving LPs.
They introduce a {\em semidefinite basis (SD basis)} that is a basis of the space of $n \times n$ symmetric matrices consisting of $n(n+1)/2$ symmetric semidefinite matrices. Using the SD basis, they devise two new subcones for which detection can be done by solving LPs with $O(n^2)$ variables and $O(n^2)$ constraints. The new subcones are larger than the ones in the previous paper and inherit their nice properties. The authors also examine the efficiency of those subcones in numerical experiments. The results show that the subcones are promising for testing copositivity as a useful application.
\end{abstract}

\noindent
{\bf Key words.} Semidefinite plus nonnegative cone, Doubly nonnegative cone, Copositive cone, Matrix decomposition, Linear programming, Semidefinite basis, Maximum clique problem, Quadratic optimization problem

\section{Introduction}
\label{sec:introduction}
Let $\mathcal{S}_n$ be the set of $n \times n$ symmetric matrices, and define their inner product as
\begin{equation}
\label{eq:inner_product}
\lag A, B \rag = \Tr (B^TA) = \sum_{i.j=1}^n a_{ij}b_{ij}.
\end{equation}
Bomze et al. \cite{aBOMZE00} coined the term ``copositive programming'' in relation to the following problem in 2000, on which many studies have since been conducted:
\[
\begin{array}{ll}
\mbox{Minimize} & \lag C, X \rag \\
\mbox{subject to} & \lag A_i, X \rag = b_i, \ (i=1,2,\ldots,m) \\
& X \in {\cal COP}_n.
\end{array}
\]
where $\mathcal{COP}_n$ is the set of $n \times n$ copositive matrices, i.e., matrices whose quadratic form takes nonnegative values on the $n$-dimensional nonnegative orthant $\mathbb{R}^n_+$:
\[
\mathcal{COP}_n := \{ X \in \mathcal{S}_n \mid d^T Xd \geq 0 \ \mbox{for all} \ d \in \mathbb{R}^n_+ \}.
\]
We call the set $\mathcal{COP}_n$ the {\em copositive cone}. A number of studies have focused on the close relationship between copositive programming and quadratic or combinatorial optimization (see, e.g., \cite{aBOMZE00}\cite{aBOMZE02}\cite{aDEKLERK02}\cite{aPOVH07}\cite{aPOVH09}\cite{BUNDFUSS09}\cite{aBURER09}\cite{aDICKINSON14b}). Interested readers may refer to \cite{aDUR10} and \cite{aBOMZE12} for background on and the history of copositive programming.

The following cones are attracting attention in the context of the relationship between combinatorial optimization and copositive optimization (see, e.g., \cite{aDUR10}\cite{aBOMZE12}).
Here, $\conv(S)$ denotes the convex hull of the set $S$.
\begin{description}
\item[-] The nonnegative cone ${\cal N}_n := \left\{X \in \mathcal{S}_n \mid x_{ij} \geq 0 \mbox{ for all} \ i, j \in \{1,2,\ldots,n\} \right\}$.
\item[-] The semidefinite cone ${\cal S}_n^+ := \{X \in \mathcal{S}_n \mid d^TXd \geq 0 \ \mbox{for all} \ d \in \mathbb{R}^n \} = \conv\left(\left\{xx^T \mid x \in \mathbb{R}^n \right\} \right) $.
\item[-] The copositive cone ${\cal COP}_n := \left\{X \in \mathcal{S}_n \mid d^TXd \geq 0 \ \mbox{ for all} \ d \in \mathbb{R}^n_+ \right\}$.
\item[-] The semidefinite plus nonnegative cone ${\cal S}_n^+ + {\cal N}_n$, which is the Minkowski sum  of ${\cal S}_n^+$ and ${\cal N}_n$.
\item[-] The union ${\cal S}_n^+ \cup {\cal N}_n$ of ${\cal S}_n^+$ and ${\cal N}_n$. 
\item[-] The doubly nonnegative cone ${\cal S}_n^+ \cap {\cal N}_n$, i.e., the set of positive semidefinite and componentwise nonnegative matrices.
\item[-] The completely positive cone ${\cal CP}_n := \conv\left(\left\{xx^T \mid x \in \mathbb{R}^n_+ \right\} \right) $.
\end{description}

Except the set ${\cal S}_n^+ \cup {\cal N}_n$, all of the above cones are proper (see Section 1.6 of \cite{bBERMAN03}, where a proper cone is called a {\em full cone}), and we can easily see from the definitions that the following inclusions hold:
\begin{equation}
\label{eq:inclusion2}
{\cal COP}_n \supseteq {\cal S}_n^+ + {\cal N}_n \supseteq {\cal S}_n^+ \cup {\cal N}_n \supseteq {\cal S}_n^+ \supseteq {\cal S}_n^+ \cap {\cal N}_n \supseteq {\cal CP}_n.
\end{equation}

While copositive programming has the potential of being a useful optimization technique, it still faces challenges. One of these challenges is to develop efficient algorithms for determining whether a given matrix is copositive. It has been shown that the above problem is co-NP-complete \cite{aMURTY87}\cite{aDICKINSON14}\cite{aDICKINSON14b} and many algorithms have been proposed to solve it (see, e.g., \cite{aBOMZE96}\cite{aBUNDFUSS08}\cite{aJOHNSON08}\cite{aJARRE09}\cite{aZILINSKAS11}\cite{aSPONSEL12}\cite{aBOMZE13}\cite{aDENG13}\cite{aDUR13}\cite{aTANAKA15}\cite{aBRAS15})
Here, we are interested in numerical algorithms which (a) apply to general symmetric matrices without any structural assumptions or dimensional restrictions and (b) are not merely recursive, i.e., do not rely on information taken from all principal submatrices, but rather focus on generating subproblems in a somehow data-driven way, as described in \cite{aBOMZE13}. There are few such algorithms, but they often use tractable subcones $\mathcal{M}_n$ of the semidefinite plus nonnegative cone ${\cal S}_n^+ + {\cal N}_n$ for detecting copositivity (see, e.g., \cite{aBUNDFUSS08}\cite{aSPONSEL12}\cite{aBOMZE13}\cite{aTANAKA15}). 
As described in Section \ref{sec:algorithms for copositivity}, these algorithms require one to check whether $A \in \mathcal{M}_n$ or $A \not\in \mathcal{M}_n$ repeatedly over simplicial partitions. The desirable properties of the subcones $\mathcal{M}_n \subseteq {\cal S}_n^+ + {\cal N}_n$ used by these algorithms can be summarized as follows:
\begin{description}
\item[P1] For any given $n \times n$ symmetric matrix $A \in \mathcal{S}_n$, we can check whether $A \in \mathcal{M}_n$ within a reasonable computation time, and
\item[P2] $\mathcal{M}_n$ is a subset of the semidefinite plus nonnegative cone ${\cal S}_n^+ + {\cal N}_n$ that at least includes the $n \times n$ nonnegative cone $\mathcal{N}_n$ and contains as many elements ${\cal S}_n^+ + {\cal N}_n$ as possible.
\end{description}
The authors, in \cite{aTANAKA15}, devised certain subcones of the semidefinite plus nonnegative cone ${\cal S}_n^+ + {\cal N}_n$ and showed that satisfaction of the requirements for membership of those cones can be detected by solving linear optimization problems (LPs) with $O(n)$ variables and $O(n^2)$ constraints. They also created an LP-based algorithm that uses these subcones for testing copositivity as an application of those cones.

The aim of this paper is twofold. First, we investigate the properties of the subcones in more detail, especially in terms of their convex hulls. Second, we search for subcones of the semidefinite plus nonnegative cone ${\cal S}_n^+ + {\cal N}_n$ that have properties {\bf P1} and {\bf P2}. To address the second aim, we introduce a {\em semidefinite basis (SD basis)} that is a basis of the space $\mathcal{S}_n$ consisting of $n(n+1)/2$ symmetric semidefinite matrices. Using the SD basis, we devise two new types of subcones for which detection can be done by solving LPs with $O(n^2)$ variables and $O(n^2)$ constraints. As we will show in Corollary \ref{coro:G and F}, these subcones are larger than the ones proposed in \cite{aTANAKA15} and inherit their nice properties. We also examine the efficiency of those subcones in numerical experiments.

This paper is organized as follows: In Section \ref{sec:subcones}, we show several tractable subcones of ${\cal S}_n^+ + {\cal N}_n$ that are receiving much attention in the field of copositive programming and investigate their properties, the results of which are summarized in Figures \ref {fig: subcones H G} and \ref{fig: Convex subcones H G}. In Section \ref{sec:SDbasis}, we propose new subcones of ${\cal S}_n^+ + {\cal N}_n$ having properties {\bf P1} and {\bf P2}. We define SD bases using Definitions \ref{def:SDbasisI} and \ref{def:SDbasisII} and construct new LPs for detecting whether a given matrix belongs to the subcones. 
In Section \ref{sec:algorithms for S+N}, we perform numerical experiments in which the new subcones are used for identifying the given matrices $A \in \mathcal{S}_n^+ + \mathcal{N}_n$.
As a useful application of the new subcones, Section \ref{sec:algorithms for copositivity} describes experiments for testing copositivity of matrices arising from the maximum clique problem and standard quadratic optimization problems. The results of these experiments show that the new subcones are promising not only for identification of $A \in \mathcal{S}_n^+ + \mathcal{N}_n$ but also for testing copositivity. We give concluding remarks in Section \ref{sec:concluding remarks}.

\section{Some tractable subcones of  ${\cal S}_n^+ + {\cal N}_n$ and related work}
\label{sec:subcones}

In this section, we show several tractable subcones of  the semidefinite plus nonnegative cone ${\cal S}_n^+ + {\cal N}_n$.
Since the set ${\cal S}_n^+ + {\cal N}_n$ is the dual cone of the doubly nonnegative cone ${\cal S}_n^+ \cap {\cal N}_n$,  we see that 
\begin{eqnarray*}
{\cal S}_n^+ + {\cal N}_n
& = & 
\{ A \in {\cal S}_n \mid \lag A, X \rag \geq 0 \ \mbox{for any} \ X \in  {\cal S}_n^+ \cap {\cal N}_n \} \\
& = & 
\{ A \in {\cal S}_n \mid \lag A, X \rag \geq 0 \ \mbox{for any} \ X \in  {\cal S}_n^+ \cap {\cal N}_n \ \mbox{such that} \ \Tr(X) = 1 \}
\end{eqnarray*}
and that the weak membership problem for ${\cal S}_n^+ + {\cal N}_n$ can be solved 
(to an accuracy of $\epsilon$) by solving the following doubly nonnegative program (which can be expressed as a semidefinite program of size $O(n^2)$).
\begin{equation}
\label{eq:SDP for S+N}
\begin{array}{ll}
\mbox{Minimize} & \lag A, X \rag \\
\mbox{subject to} & \lag I_n, X \rag = 1, \ X \in {\cal S}_n^+ \cap {\cal N}_n
\end{array}\end{equation}
where $I_n$ denotes the $n \times n$ identity matrix. Thus, the set ${\cal S}_n^+ + {\cal N}_n$ is a rather large and tractable convex subcone of $\mathcal{COP}_n$. However, solving the problem takes a lot of time \cite{aSPONSEL12}, \cite{aYOSHISE10} and does not make for a practical implementation in general. To overcome this drawback, more easily tractable subcones of ${\cal S}_n^+ + {\cal N}_n$ have been proposed.

We define  the matrix functions $N, S: {\cal S}_n \rightarrow {\cal S}_n$ such that, for $A \in \mathcal{S}_n$,  we have
\begin{equation}
\label{eq:N(A)}
N(A)_{ij}:=
\left\{
\begin{array}{ll}
A_{ij} & \ \  (A_{ij}>0 \ \mbox{and} \ i \neq j) \\
0 & \ \  (\mbox{otherwise})
\end{array}
\right.
\ \ \mbox{and} \ \ S(A):= A-N(A).
\end{equation}
In \cite{aSPONSEL12}, the authors defined the following set:
\begin{equation}
\label{eq:H_n}
\mathcal{H}_n := \{ A \in \mathcal{S}_n \mid S(A) \in \mathcal{S}_n^+ \}.
\end{equation}
Here, we should note that $A = S(A)+ N(A) \in \mathcal{S}_n^+ + \mathcal{N}_n$ if $A \in \mathcal{H}_n$. Also, for any $A \in \mathcal{N}_n$, $S(A)$ is a nonnegative diagonal matrix, and hence, $\mathcal{N}_n\subseteq \mathcal{H}_n$. 
The determination of $A \in \mathcal{H}_n$ is easy and can be done by extracting the positive elements $A_{ij} > 0  \ (i \neq j)$ as $N(A)_{ij}$ and by performing a Cholesky factorization of $S(A)$ (cf. Algorithm 4.2.4 in \cite{bGOLUB96}). Thus, from the inclusion relation (\ref{eq:inclusion2}), we see that the set $\mathcal{H}_n$ has the desirable {\bf P1} property. However, $S(A)$ is not necessarily positive semidefinite even if $A \in \mathcal{S}_n^+ + \mathcal{N}_n$ or $A \in \mathcal{S}_n^+$. The following theorem summarizes the properties of the set $\mathcal{H}_n$.
\begin{theorem}[\cite{aFIEDLER62} and Theorem 4.2 of \cite{aSPONSEL12}]
\label{theo:H_n}
$\mathcal{H}_n$ is a convex cone and $\mathcal{N}_n \subseteq \mathcal{H}_n \subseteq \mathcal{S}_n^+ + \mathcal{N}_n$. If $n \geq 3$, these inclusions are strict and ${\cal S}_n^+ \not\subseteq \mathcal{H}_n$. For $n=2$, we have $\mathcal{H}_n = \mathcal{S}_n^+ \cup \mathcal{N}_n = \mathcal{S}_n^+ + \mathcal{N}_n =\mathcal{COP}_n$.
\end{theorem}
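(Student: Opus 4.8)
The plan is to establish the statement in four blocks: the cone and convexity properties of $\mathcal{H}_n$; the sandwich $\mathcal{N}_n\subseteq\mathcal{H}_n\subseteq\mathcal{S}_n^++\mathcal{N}_n$; the strictness of these two inclusions (and $\mathcal{S}_n^+\not\subseteq\mathcal{H}_n$) for $n\ge 3$; and the collapse of all four cones for $n=2$. Three basic facts are already recorded in the discussion preceding the statement and I would simply formalize them: for $A\in\mathcal{N}_n$ the matrix $S(A)$ is the entrywise nonnegative diagonal part of $A$, hence $S(A)\in\mathcal{S}_n^+$ and $\mathcal{N}_n\subseteq\mathcal{H}_n$; for $A\in\mathcal{H}_n$ the identity $A=S(A)+N(A)$ exhibits $A\in\mathcal{S}_n^++\mathcal{N}_n$; and since $N(\lambda A)=\lambda N(A)$ for $\lambda>0$ (positive scaling preserves signs and the off-diagonal pattern), $S(\lambda A)=\lambda S(A)\in\mathcal{S}_n^+$, so $\mathcal{H}_n$ is closed under positive scaling.

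The core of the argument, and the step I expect to be the main obstacle, is convexity — equivalently, closure under addition. The key point is that $S(\cdot)$ always yields a matrix with nonpositive off-diagonal entries, and for such a symmetric matrix $M$ one has, for every $x$ (with $|x|$ the entrywise absolute value),
\[
x^TMx-|x|^TM|x|=2\sum_{i<j}M_{ij}\bigl(x_ix_j-|x_i||x_j|\bigr)\ \ge\ 0,
\]
since in each summand $M_{ij}\le 0$ and $x_ix_j-|x_i||x_j|\le 0$; thus positive semidefiniteness of such an $M$ need only be checked on the nonnegative orthant. Now let $A,B\in\mathcal{H}_n$. Comparing the splittings of $A$, $B$ and $A+B$ gives $S(A+B)=S(A)+S(B)+P$ with $P:=N(A)+N(B)-N(A+B)$, and $P$ has zero diagonal and nonnegative off-diagonal entries because $t\mapsto\max(t,0)$ is subadditive. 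Hence for every $x$,
\[
x^TS(A+B)x\ \ge\ |x|^TS(A+B)|x|\ =\ |x|^TS(A)|x|+|x|^TS(B)|x|+|x|^TP|x|\ \ge\ 0,
\]
using $S(A),S(B)\in\mathcal{S}_n^+$ and $|x|^TP|x|=2\sum_{i<j}P_{ij}|x_i||x_j|\ge 0$; so $S(A+B)\in\mathcal{S}_n^+$, i.e.\ $A+B\in\mathcal{H}_n$. The subtlety to flag is that the decomposition $S(A+B)=S(A)+S(B)+P$ adds a nonnegative matrix $P$, which would in general destroy positive semidefiniteness; it is precisely the nonpositive-off-diagonal structure of $S(\cdot)$ that lets one reduce to the nonnegative orthant, where $P$ becomes harmless.

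For $n\ge 3$ I would produce explicit witnesses. Placing $\left(\begin{smallmatrix}1&-1\\-1&1\end{smallmatrix}\right)$ in the leading $2\times 2$ block and $I_{n-2}$ in the trailing block gives a matrix that equals its own $S(\cdot)$ and is positive semidefinite, hence lies in $\mathcal{H}_n$, yet has a negative entry, so $\mathcal{N}_n\subsetneq\mathcal{H}_n$. For the other inclusion — and simultaneously for $\mathcal{S}_n^+\not\subseteq\mathcal{H}_n$ — set $v=(1,1,-1)^T$ and let $A=\diag(vv^T,O_{n-3})\in\mathcal{S}_n^+$, where $O_{n-3}$ is the zero matrix (so $A=vv^T$ when $n=3$); zeroing the single positive off-diagonal pair of $A$ turns the leading block into
\[
\begin{pmatrix}1&0&-1\\ 0&1&-1\\ -1&-1&1\end{pmatrix},
\]
whose determinant is $-1$, so $S(A)\notin\mathcal{S}_n^+$ and $A\notin\mathcal{H}_n$; since $A\in\mathcal{S}_n^+\subseteq\mathcal{S}_n^++\mathcal{N}_n$, this also gives $\mathcal{H}_n\subsetneq\mathcal{S}_n^++\mathcal{N}_n$.

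Finally, for $n=2$ everything is elementary. If $A=\left(\begin{smallmatrix}a&b\\ b&c\end{smallmatrix}\right)\in\mathcal{COP}_2$ then $a=e_1^TAe_1\ge 0$ and $c=e_2^TAe_2\ge 0$; if moreover $b\ge 0$ then $A\in\mathcal{N}_2$, while if $b<0$ then testing copositivity at $x=(\sqrt{c},\sqrt{a})^T$ forces $ac\ge b^2$, hence $A\in\mathcal{S}_2^+$. Thus $\mathcal{COP}_2\subseteq\mathcal{S}_2^+\cup\mathcal{N}_2$, and together with $\mathcal{S}_2^+\cup\mathcal{N}_2\subseteq\mathcal{S}_2^++\mathcal{N}_2\subseteq\mathcal{COP}_2$ (from (\ref{eq:inclusion2})) this yields $\mathcal{COP}_2=\mathcal{S}_2^+\cup\mathcal{N}_2=\mathcal{S}_2^++\mathcal{N}_2$. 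The same case split shows $\mathcal{COP}_2\subseteq\mathcal{H}_2$ (when $b\ge 0$, $S(A)=\diag(a,c)\in\mathcal{S}_2^+$; when $b<0$, $S(A)=A\in\mathcal{S}_2^+$), and combined with $\mathcal{H}_2\subseteq\mathcal{S}_2^++\mathcal{N}_2$ this forces $\mathcal{H}_2=\mathcal{COP}_2$, so all four cones coincide.
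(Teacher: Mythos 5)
The paper does not prove this theorem; it simply quotes it, citing Fiedler--Pt\'ak \cite{aFIEDLER62} and Theorem~4.2 of \cite{aSPONSEL12}, so there is no in-paper proof to compare against. Your argument is a correct, self-contained proof, and the convexity step is the right one: the observation that $S(\cdot)$ always has nonpositive off-diagonal entries, so $x^T M x \ge |x|^T M |x|$ and therefore PSD reduces to copositivity, combined with $S(A+B)=S(A)+S(B)+P$ for a componentwise-nonnegative $P$ with zero diagonal, is exactly the Fiedler--Pt\'ak mechanism that the cited sources rely on. Your witnesses for $n\ge 3$ are correct as well (the $2\times 2$ block $\left(\begin{smallmatrix}1&-1\\-1&1\end{smallmatrix}\right)$ padded by $I_{n-2}$ for the first strict inclusion; $vv^T$ with $v=(1,1,-1)^T$ padded by zeros, whose $S(\cdot)$ has determinant $-1$, for the second and for $\mathcal{S}_n^+\not\subseteq\mathcal{H}_n$).

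One small gap to close in the $n=2$ block: the test vector $x=(\sqrt{c},\sqrt{a})^T$ yields $2ac+2b\sqrt{ac}\ge 0$, which forces $ac\ge b^2$ only when $ac>0$; if $ac=0$ and $b<0$ the inequality is the vacuous $0\ge 0$. You should observe separately that a copositive $2\times 2$ matrix with $b<0$ cannot have $ac=0$ (e.g.\ if $a=0$ take $x=(1,\varepsilon)$ with $\varepsilon>0$ small, giving $x^TAx=2b\varepsilon+c\varepsilon^2<0$). Once that degenerate case is excluded the case split $b\ge 0$ vs.\ $b<0$ closes cleanly, and the chain $\mathcal{H}_2\subseteq\mathcal{S}_2^++\mathcal{N}_2\subseteq\mathcal{COP}_2\subseteq\mathcal{S}_2^+\cup\mathcal{N}_2\subseteq\mathcal{H}_2$ gives all four equalities.
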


The construction of the subcone $\mathcal{H}_n$ is based on the idea of  ``checking nonnegativity first and checking positive semidefiniteness second.'' In \cite{aTANAKA15}, another subcone is provided that is based on the idea of ``checking positive semidefiniteness first and checking nonnegativity second.'' 
Let ${\cal O}_n$ be the set of $n \times n$ orthogonal matrices and ${\cal D}_n$ be the set of $n \times n$ diagonal matrices.
For a given symmetric matrix $A \in \mathcal{S}_n$, suppose that $P=[p_1, p_2, \cdots, p_n] \in {\cal O}_n$ and $\Lambda = \Diag(\lambda_1, \lambda_2, \ldots, \lambda_n) \in {\cal D}_n$ satisfy
\begin{equation}
\label{eq:diagonalization}
A = P\Lambda P^T = \sum_{i=1}^n \lambda_i p_ip_i^T.
\end{equation}
By introducing another diagonal matrix $\Omega = \Diag(\omega_1, \omega_2, \ldots, \omega_n) \in {\cal D}_n$, we can make the following decomposition:
\begin{equation}
\label{eq:decomposition}
A= P(\Lambda-\Omega)P^T+P\Omega P^T
\end{equation}
If $\Lambda-\Omega \in {\cal N}_n$, i.e., if $\lambda_i \geq \omega_i \ (i = 1,2, \ldots,n)$, then the matrix $P(\Lambda-\Omega)P^T$ is positive semidefinite. Thus, if we can find a suitable diagonal matrix $\Omega \in {\cal D}_n$ satisfying
\begin{equation}
\label{eq:decomposition2}
\lambda_i \geq \omega_i \ (i = 1,2, \ldots,n), \ \ [P\Omega P^T]_{ij} \geq 0 \ (1 \leq i \leq j \leq n)
\end{equation}
then (\ref{eq:decomposition}) and (\ref{eq:inclusion2}) imply
\begin{equation}
\label{eq:decomposition3}
A= P(\Lambda-\Omega)P^T+P\Omega P^T \in \mathcal{S}_n^+ + \mathcal{N}_n 
\subseteq \mathcal{COP}_n.
\end{equation}
We can determine whether such a matrix exists or not by solving the following linear optimization problem with variables $\omega_i \ (i=1,2,\ldots,n) $ and $\alpha$:
\begin{equation}
\label{eq:LP}
\mbox{(LP)}_{P,\Lambda} \ 
\left|
\begin{array}{lll}
\mbox{Maximize} & \alpha & \\
\mbox{subject to} & \omega_i \leq \lambda_i & (i=1,2,\ldots,n) \\
& \displaystyle{ [P\Omega P^T]_{ij} = \left[\sum_{k=1}^n \omega_k p_kp_k^T \right]_{ij} \geq \alpha} & (1 \leq i \leq j \leq n) 
\end{array}
\right.
\end{equation}
Here, for a given matrix $A$,  $[A]_{ij}$ denotes the $(i,j)$th element of $A$.

Problem $\mbox{(LP)}_{P,\Lambda}$ has a feasible solution at which $\omega_i = \lambda_i \ (i=1,2,\ldots,n)$ and 
\[
\alpha 
= \min \left\{ \left[ P\Lambda P^T \right]_{ij} \mid 1 \leq i \leq j \leq n \right\}
= \min \left\{ \sum_{k=1}^n \lambda_k [p_k]_i[p_k]_j \mid 1 \leq i \leq j \leq n \right\}.
\]
For each $i=1,2,\ldots,n$, the constraints 
\[
[P\Omega P^T]_{ii} = \left[ \sum_{k=1}^n \omega_k p_kp_k^T \right]_{ii} = \sum_{k=1}^n \omega_k [p_k]_i^2 \geq \alpha
\]
and $\omega_k \leq \lambda_k \ (k=1,2,\ldots,n)$ imply the bound $\alpha \leq \min \left\{\sum_{k=1}^n \lambda_k [p_k]_i^2 \mid 1 \leq i \leq n \right\}$.\label{description:LP} Thus, $\mbox{(LP)}_{P,\Lambda}$ has an optimal solution with optimal value $\alpha_*(P, \Lambda)$. If $\alpha_*(P, \Lambda) \geq 0$, there exists a matrix $\Omega$ for which the decomposition (\ref{eq:decomposition2}) holds. The following set $\mathcal{G}_n$ is based on the above observations and was proposed in \cite{aTANAKA15} as the set,  $\mathcal{G}_n$
\begin{equation}
\label{eq:G_n}
\mathcal{G}_n :=\{A \in \mathcal{S}_n \mid \mathcal{PL}_{\mathcal{G}_n}(A) \neq \emptyset \}
\end{equation}
where 
\begin{equation}
\label{eq:PLG(A)}
\mathcal{PL}_{\mathcal{G}_n}(A) := \{ (P, \Lambda) \in {\cal O}_n \times {\cal D}_n \mid \mbox{$P$ and $\Lambda$ satisfy (\ref{eq:diagonalization}) and $\alpha_*(P,\Lambda) \geq 0$} \}
\end{equation}
for a given $A \in {\cal S}_n$.
As stated above, if $\alpha_*(P,\Lambda) \geq 0$ for a given decomposition $A = P\Lambda P^T$, we can determine $A \in \mathcal{G}_n$. In this case, we just need to compute a matrix decomposition and solve a linear optimization problem with $n+1$ variables and $O(n^2)$ constraints, which implies that it is rather practical to use the set $\mathcal{G}_n$ as an alternative to using  ${\cal S}_n^+ + {\cal N}_n$. Suppose that $A \in \mathcal{S}_n$ has $n$ different eigenvalues. Then the possible orthogonal matrices $P = [p_1, p_2, \cdots, p_n] \in {\cal O}_n$ are identifiable, except for the permutation and sign inversion of $\{p_1, p_2, \cdots, p_n \}$, and by representing (\ref{eq:diagonalization}) as 
\[
A = \sum_{i=1}^n \lambda_i p_ip_i^T,
\]
we can see that the problem $\mbox{(LP)}_{P,\Lambda}$ is unique for any possible $P \in {\cal O}_n$. In this case, $\alpha_*(P,\Lambda) < 0$ with a specific $P \in {\cal O}_n$ implies $A \not\in \mathcal{G}_n$. However, if this is not the case (i.e., an eigenspace of $A$ has at least dimension $2$), $\alpha_*(P,\Lambda) < 0$ with a specific $P \in {\cal O}_n$ does not necessarily guarantee that $A \not\in \mathcal{G}_n$. 

The above discussion can be extended to any matrix $P \in \mathbb{R}^{m \times n}$; i.e., it does not necessarily have to be orthogonal or even square. The reason why the orthogonal matrices $P \in  {\cal O}_n$ are dealt with here is that some decomposition methods for (\ref{eq:diagonalization}) have been established for such orthogonal $P$s.
The property $\mathcal{G}_n = \mbox{com}(\mathcal{S}_n, \mathcal{N}_n)$ in Theorem \ref{theo:G_n} also follows when $P$ is orthogonal. 

In \cite{aTANAKA15}, the authors described another set $\widehat{\mathcal{G}_n}$ that is closely related to $\mathcal{G}_n$.
\begin{equation}
\label{eq:G_n hat}
\widehat{\mathcal{G}_n}  :=  \{A \in \mathcal{S}_n \mid \mathcal{PL}_{\widehat{\mathcal{G}_n}}(A) \neq \emptyset \}
\end{equation}
where for $A \in {\cal S}_n$, the set $\mathcal{PL}_{\widehat{\mathcal{G}_n}}(A)$ is given by replacing ${\cal O}_n$ in (\ref{eq:PLG(A)}) by the space $\mathbb{R}^{n \times n}$ of $n \times n$ arbitrary matrices, i.e., 
\begin{equation}
\label{eq:PLGhat(A)}
\mathcal{PL}_{\widehat{\mathcal{G}_n}}(A) := \{ (P, \Lambda) \in \mathbb{R}^{n\times n} \times {\cal D}_n \mid \mbox{$P$ and $\Lambda$ satisfy (\ref{eq:diagonalization}) and $\alpha_*(P,\Lambda) \geq 0$} \}.
\end{equation}
If the set $\mathcal{PL}_{\mathcal{G}_n}(A)$ in (\ref{eq:PLG(A)}) is nonempty, then the set $\mathcal{PL}_{\widehat{\mathcal{G}_n}}(A)$ is also nonempty, which implies the following inclusions:
\begin{equation}
\label{eq:inclusion4}
\mathcal{G}_n
\subseteq
\widehat{\mathcal{G}_n}
\subseteq
\mathcal{S}_n^+ + \mathcal{N}_n.
\end{equation}

Before describing the properties of the sets $\mathcal{G}_n$ and $\widehat{\mathcal{G}_n}$, we will prove a preliminary lemma.

\begin{lemma}
\label{lemm: convex hull of two cones}
Let $\mathcal{K}_1$ and $\mathcal{K}_2$ be two convex cones containing the origin. 
Then $\conv(\mathcal{K}_1 \cup \mathcal{K}_2) = \mathcal{K}_1 + \mathcal{K}_2$.
\end{lemma}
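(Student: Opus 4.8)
The plan is to prove the two inclusions $\conv(\mathcal{K}_1 \cup \mathcal{K}_2) \subseteq \mathcal{K}_1 + \mathcal{K}_2$ and $\mathcal{K}_1 + \mathcal{K}_2 \subseteq \conv(\mathcal{K}_1 \cup \mathcal{K}_2)$ separately.

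For the first inclusion, I would first observe that $\mathcal{K}_1 + \mathcal{K}_2$ is a convex cone (the Minkowski sum of two convex cones is again a convex cone) and that it contains both $\mathcal{K}_1$ and $\mathcal{K}_2$, using the hypothesis that each $\mathcal{K}_i$ contains the origin (so $\mathcal{K}_1 = \mathcal{K}_1 + \{0\} \subseteq \mathcal{K}_1 + \mathcal{K}_2$, and similarly for $\mathcal{K}_2$). Hence $\mathcal{K}_1 + \mathcal{K}_2$ is a convex set containing $\mathcal{K}_1 \cup \mathcal{K}_2$, and since $\conv(\mathcal{K}_1 \cup \mathcal{K}_2)$ is by definition the smallest convex set containing $\mathcal{K}_1 \cup \mathcal{K}_2$, we get $\conv(\mathcal{K}_1 \cup \mathcal{K}_2) \subseteq \mathcal{K}_1 + \mathcal{K}_2$.

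For the reverse inclusion, take an arbitrary element $x \in \mathcal{K}_1 + \mathcal{K}_2$, so $x = x_1 + x_2$ with $x_i \in \mathcal{K}_i$. Since $\mathcal{K}_i$ is a cone, $2x_i \in \mathcal{K}_i$, and therefore $x = \tfrac{1}{2}(2x_1) + \tfrac{1}{2}(2x_2)$ is a convex combination of the point $2x_1 \in \mathcal{K}_1 \subseteq \mathcal{K}_1 \cup \mathcal{K}_2$ and the point $2x_2 \in \mathcal{K}_2 \subseteq \mathcal{K}_1 \cup \mathcal{K}_2$. Hence $x \in \conv(\mathcal{K}_1 \cup \mathcal{K}_2)$, which gives $\mathcal{K}_1 + \mathcal{K}_2 \subseteq \conv(\mathcal{K}_1 \cup \mathcal{K}_2)$ and completes the proof.

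I do not anticipate a genuine obstacle here; the only point requiring a little care is making sure the conic scaling is used correctly in the reverse inclusion (writing $x$ as a convex combination forces the factor $2$, which is exactly why the cone property, not just convexity, is needed), and noting explicitly where the assumption $0 \in \mathcal{K}_i$ is used in the forward inclusion. Everything else is routine set-theoretic manipulation.
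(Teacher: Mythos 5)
Your proof is correct and follows essentially the same route as the paper's: both directions are argued by the same set-theoretic containments, using $0\in\mathcal{K}_i$ to get $\mathcal{K}_1\cup\mathcal{K}_2\subseteq\mathcal{K}_1+\mathcal{K}_2$ and convexity of the Minkowski sum to absorb the hull. Your version simply makes explicit the step the paper dismisses as easy (the $x=\tfrac12(2x_1)+\tfrac12(2x_2)$ convex combination showing $\mathcal{K}_1+\mathcal{K}_2\subseteq\conv(\mathcal{K}_1\cup\mathcal{K}_2)$), which is a welcome clarification but not a different argument.
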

\begin{proof}
Since $\mathcal{K}_1$ and $\mathcal{K}_2$ are convex cones, we can easily see that the inclusion $\mathcal{K}_1 + \mathcal{K}_2 \subseteq \conv(\mathcal{K}_1 \cup \mathcal{K}_2)$ holds. The converse inclusion also follows from the fact that $\mathcal{K}_1$ and $\mathcal{K}_2$ are convex cones. Since $\mathcal{K}_1$ and $\mathcal{K}_2$ contain the origin, we see that the inclusion $\mathcal{K}_1 \cup \mathcal{K}_2 \subseteq \mathcal{K}_1 + \mathcal{K}_2$ holds. From this inclusion and the convexity of the sets $\mathcal{K}_1$ and $\mathcal{K}_2$, we can conclude that 
\[
\conv(\mathcal{K}_1 \cup \mathcal{K}_2) \subseteq \conv(\mathcal{K}_1 + \mathcal{K}_2)
= \mathcal{K}_1 + \mathcal{K}_2.
\]
\end{proof}

The following theorem shows some of the properties of $\mathcal{G}_n$ and $\widehat{\mathcal{G}_n}$. Assertions (i) and (ii) were proved in Theorem 3.2 of \cite{aTANAKA15}. Assertion (iii) comes from the fact that $\mathcal{S}_n^+$ and  $\mathcal{N}_n$ are convex cones and from Lemma \ref{lemm: convex hull of two cones}.
Assertions (iv)-(vi) follow from (i)-(iii), the inclusion (\ref{eq:inclusion4}) and Theorem \ref{theo:H_n}.

\begin{theorem}
\label{theo:G_n}
\begin{description}
\item[(i)] 
$\mathcal{S}_n^+ \cup \mathcal{N}_n \subseteq \mathcal{G}_n$
\item[(ii)] $\mathcal{G}_n = \com(\mathcal{S}_n^+, \mathcal{N}_n)$, where the set $\com(\mathcal{S}_n^+, \mathcal{N}_n)$ is defined by
\[
\com(\mathcal{S}_n^+, \mathcal{N}_n) := 
\{ S + N \mid S \in \mathcal{S}_n^+, \ N \in \mathcal{N}_n, \ \mbox{\em $S$ and $N$ commute} \}.
\]
\item[(iii)]
$\conv(\mathcal{S}_n^+ \cup \mathcal{N}_n) = \mathcal{S}_n^+ + \mathcal{N}_n$.
\item[(iv)]
$
\mathcal{S}_n^+ \cup \mathcal{N}_n
\subseteq 
\mathcal{G}_n
=
\mbox{\em com}(\mathcal{S}_n^+, \mathcal{N}_n)
\subseteq
\widehat{\mathcal{G}_n}
\subseteq
\mathcal{S}_n^+ + \mathcal{N}_n
$.
\item[(v)] If $n = 2$, then
$
\mathcal{S}_n^+ \cup \mathcal{N}_n
=
\mathcal{G}_n
=
\com(\mathcal{S}_n^+, \mathcal{N}_n)
=
\widehat{\mathcal{G}_n}
=
\mathcal{S}_n^+ + \mathcal{N}_n
$
\item[(vi)]
$
\conv(\mathcal{S}_n^+ \cup \mathcal{N}_n)
=
\conv(\mathcal{G}_n)
=
\conv\left(\com(\mathcal{S}_n^+, \mathcal{N}_n)\right)
=
\conv(\widehat{\mathcal{G}_n})
=
\mathcal{S}_n^+ + \mathcal{N}_n
$.
\end{description}
\end{theorem}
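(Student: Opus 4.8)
The plan is to prove the six assertions of Theorem~\ref{theo:G_n} essentially in the order listed, leaning on the two results already available --- Theorem~\ref{theo:H_n} for $\mathcal{H}_n$ and Lemma~\ref{lemm: convex hull of two cones} --- together with the inclusion chain~(\ref{eq:inclusion4}). Since the excerpt tells us that (i) and (ii) are exactly Theorem~3.2 of \cite{aTANAKA15}, I would simply cite that reference for those two items; the real work on my side is to assemble (iii)--(vi) from them. For (iii), I would apply Lemma~\ref{lemm: convex hull of two cones} with $\mathcal{K}_1 = \mathcal{S}_n^+$ and $\mathcal{K}_2 = \mathcal{N}_n$: both are convex cones containing the origin (the zero matrix is positive semidefinite and componentwise nonnegative), so the lemma gives $\conv(\mathcal{S}_n^+ \cup \mathcal{N}_n) = \mathcal{S}_n^+ + \mathcal{N}_n$ immediately.

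For (iv) I would chain together what is already on the table: (i) gives $\mathcal{S}_n^+ \cup \mathcal{N}_n \subseteq \mathcal{G}_n$, (ii) identifies $\mathcal{G}_n$ with $\com(\mathcal{S}_n^+,\mathcal{N}_n)$, and (\ref{eq:inclusion4}) supplies $\mathcal{G}_n \subseteq \widehat{\mathcal{G}_n} \subseteq \mathcal{S}_n^+ + \mathcal{N}_n$; concatenating these yields the displayed five-term chain. For (v), the case $n=2$, I would argue that by Theorem~\ref{theo:H_n} we have $\mathcal{S}_n^+ \cup \mathcal{N}_n = \mathcal{S}_n^+ + \mathcal{N}_n$ when $n=2$, so every set sandwiched between $\mathcal{S}_n^+ \cup \mathcal{N}_n$ and $\mathcal{S}_n^+ + \mathcal{N}_n$ in the chain from (iv) is forced to equal both endpoints, collapsing the whole chain to a single set. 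For (vi), I would take $\conv$ of the chain in (iv): $\conv$ is monotone under inclusion, so $\conv(\mathcal{S}_n^+ \cup \mathcal{N}_n) \subseteq \conv(\mathcal{G}_n) \subseteq \conv(\com(\mathcal{S}_n^+,\mathcal{N}_n)) \subseteq \conv(\widehat{\mathcal{G}_n}) \subseteq \conv(\mathcal{S}_n^+ + \mathcal{N}_n)$; but $\mathcal{S}_n^+ + \mathcal{N}_n$ is already convex so its convex hull is itself, and by (iii) the leftmost term equals $\mathcal{S}_n^+ + \mathcal{N}_n$ as well --- hence all five convex hulls coincide.

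Given that (i) and (ii) are quoted rather than reproved, none of the remaining steps involves a genuine obstacle; they are all monotonicity-and-sandwiching arguments. The one place warranting a word of care is (vi): I should make explicit that $\mathcal{S}_n^+ + \mathcal{N}_n$ is convex (being a Minkowski sum of two convex sets) so that $\conv(\mathcal{S}_n^+ + \mathcal{N}_n) = \mathcal{S}_n^+ + \mathcal{N}_n$, since the whole collapse hinges on having a convex set at the top of the chain and a set whose convex hull already reaches the top at the bottom. Everything else is bookkeeping with the inclusions.
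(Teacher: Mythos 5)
Your proposal is correct and follows essentially the same route as the paper: cite the earlier work for (i)--(ii), apply Lemma~\ref{lemm: convex hull of two cones} for (iii), and derive (iv)--(vi) by chaining the inclusions from (i)--(ii) together with~(\ref{eq:inclusion4}), invoking the $n=2$ case of Theorem~\ref{theo:H_n} to collapse the chain for (v), and taking convex hulls with (iii) plus the convexity of $\mathcal{S}_n^+ + \mathcal{N}_n$ for (vi).
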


A number of examples provided in \cite{aTANAKA15} illustrate the differences between $\mathcal{H}_n$, $\mathcal{G}_n$. Moreover, the following two matrices have three different eigenvalues, respectively, and we can identify 
\begin{equation}
\label{eq:examples}
\left[ \begin{array}{rrr} 2 & 2 & 2\\ 2 & 2 & -3\\ 2 & -3 & 6 \end{array} \right]
\in \mathcal{H}_3 \setminus  \mathcal{G}_3, \ \
\left[ \begin{array}{rrr} 1 & 5 & -2\\  5 & 1 & -2 \\ -2 & -2 & 4 \end{array} \right]
\in (\mathcal{S}_3^+ + \mathcal{N}_3) \setminus (\mathcal{H}_3 \cup \mathcal{G}_3)
\end{equation}
by solving the associated LPs. 
Figure \ref{fig: subcones H G} draws those examples and (ii) of Theorem \ref{theo:G_n}. 
Figure \ref{fig: Convex subcones H G} follows from (vii) of Theorem \ref{theo:G_n} and the convexity of the sets $\mathcal{N}_n$, $\mathcal{S}_n^+$ and $\mathcal{H}_n$ (see Theorem \ref{theo:H_n}).

\begin{figure}[ht]
\begin{center}
\includegraphics[width=130mm]{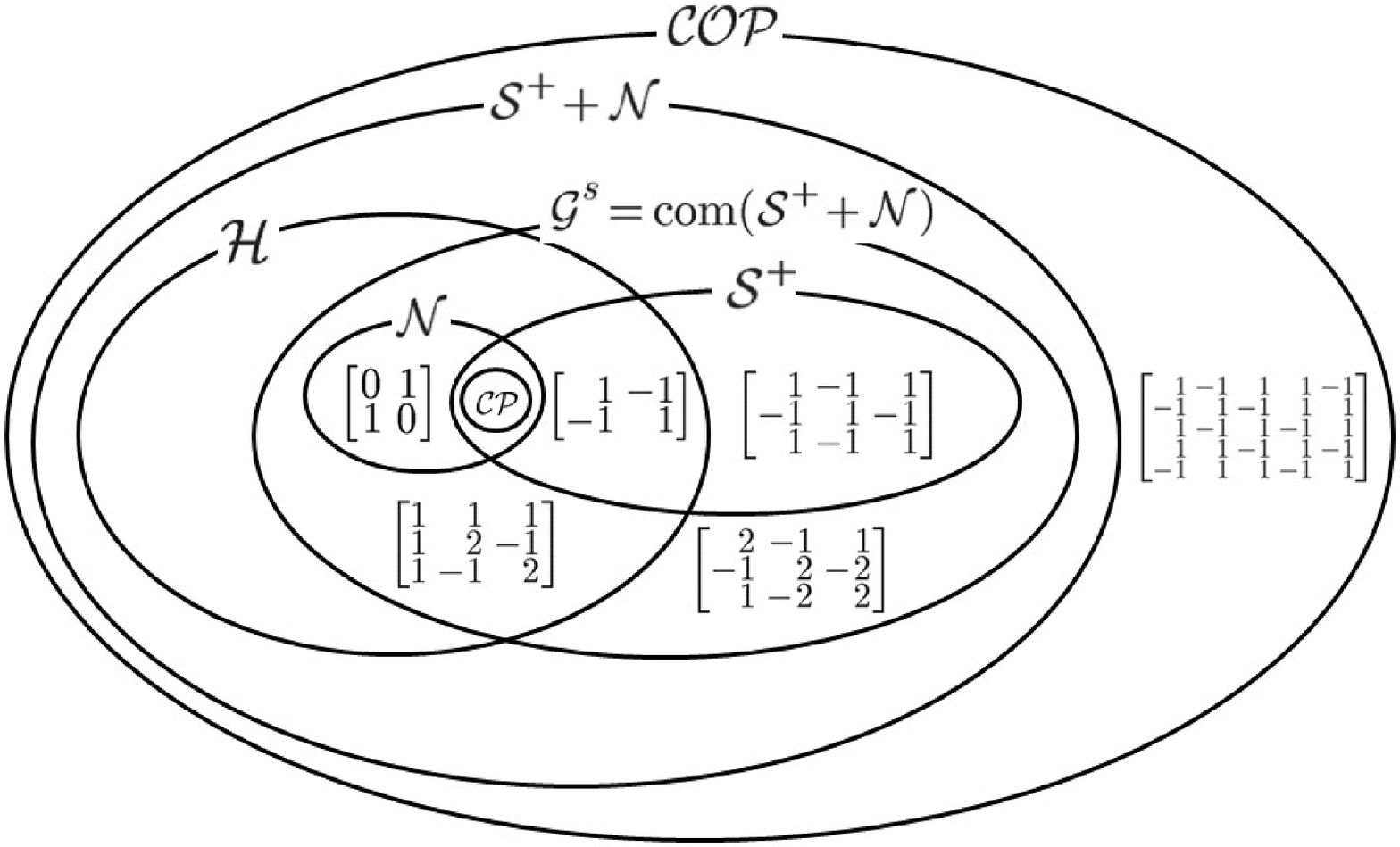}
\caption{Examples of inclusion relations among the subcones of $\mathcal{S}_n^+ + \mathcal{N}_n$ I}
\label{fig: subcones H G}
\end{center}
\end{figure}

\begin{figure}[ht]
\begin{center}
\includegraphics[width=130mm]{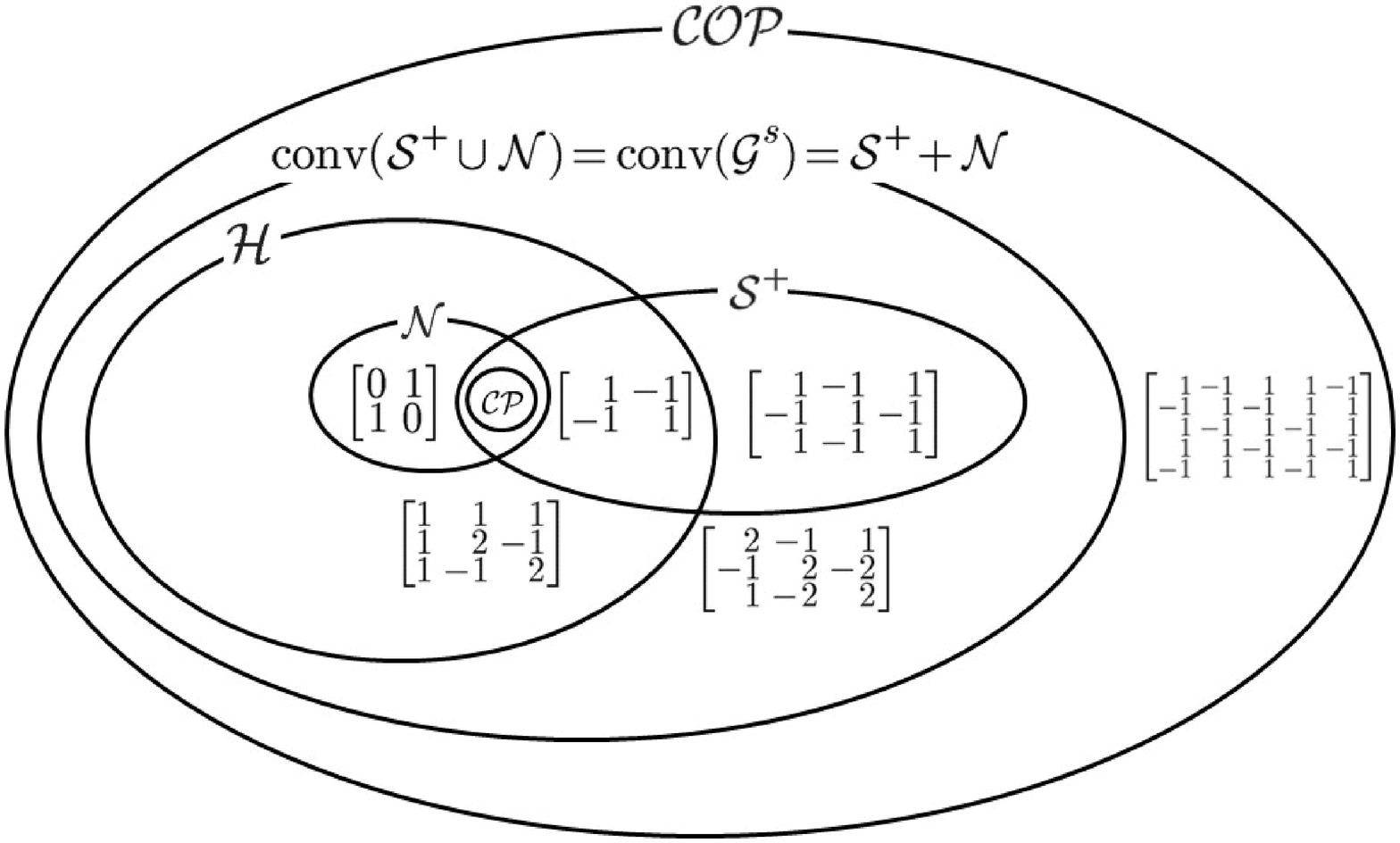}
\caption{Examples of inclusion relations among the subcones of $\mathcal{S}_n^+ + \mathcal{N}_n$ II}
\label{fig: Convex subcones H G}
\end{center}
\end{figure}

At present, it is not clear whether the set $\mathcal{G}_n = \com(\mathcal{S}_n^+, \mathcal{N}_n)$ is convex or not. As we will mention on page \pageref{page:nonconvex}, our numerical results suggest that the set might be not convex.

Before closing this discussion, we should point out another interesting subset of $\mathcal{S}_n^+ + \mathcal{N}_n$ proposed by Bomze and Eichfelder \cite{aBOMZE13}. Suppose that a given matrix $A \in \mathcal{S}_n$ can be decomposed as (\ref{eq:diagonalization}), and define the diagonal matrix $\Lambda_+$ by $ [\Lambda_+]_{ii} = \max\{0, \lambda_i \}$. Let $A_+: = P\Lambda_+P^T$ and $A_- := A_+ - A$. Then, we can easily see that $A_+$ and $A_-$ are positive semidefinite. Using this decomposition $A = A_+ - A_-$, Bomze and Eichfelder derived the following LP-based sufficient condition for $A \in \mathcal{S}_n^+ + \mathcal{N}_n$ in \cite{aBOMZE13}.

\begin{theorem}
[Theorem 2.6 of \cite{aBOMZE13}]
\label{theo:Bomze13}
Let $x \in \mathbb{R}_n^+$ be such that $A_+ x$ has only positive coordinates. If 
\[
(x^TA_+ x) (A_-)_{ii} \leq [(A_+x)_i ]^2 \ (i=1,2,\ldots,n)
\]
then $A \in \mathcal{COP}_n$.
\end{theorem}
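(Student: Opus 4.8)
The plan is to establish the claimed implication $A \in \mathcal{COP}_n$ by exhibiting an explicit decomposition $A = M + D$ where $M \in \mathcal{S}_n^+$ and $D \in \mathcal{N}_n$ (in fact $D$ will be a nonnegative diagonal perturbation of something manifestly nonnegative), and then invoking the inclusion $\mathcal{S}_n^+ + \mathcal{N}_n \subseteq \mathcal{COP}_n$ from (\ref{eq:inclusion2}). So the real content is to show $A \in \mathcal{S}_n^+ + \mathcal{N}_n$ under the stated hypothesis. First I would fix the vector $x \in \mathbb{R}^n_+$ with $y := A_+ x$ having strictly positive coordinates, and recall that $A = A_+ - A_-$ with both $A_+, A_- \in \mathcal{S}_n^+$. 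The idea is to ``absorb'' the negative part $A_-$ using a rank-one correction built from $y$: write
\[
A = A_+ - A_- = \left( A_+ - \frac{1}{x^T A_+ x}\, y y^T \right) + \left( \frac{1}{x^T A_+ x}\, y y^T - A_- \right),
\]
noting $x^T A_+ x = x^T y > 0$. The first bracketed matrix is positive semidefinite: $A_+ \succeq \frac{1}{x^T A_+ x} A_+ x x^T A_+$ is exactly the statement that $A_+ \succeq A_+ x (x^T A_+ x)^{-1} x^T A_+$, which follows from a Schur-complement / Cauchy–Schwarz argument since $A_+ \succeq 0$ (concretely, for any $v$, $(v^T A_+ x)^2 \le (v^T A_+ v)(x^T A_+ x)$).

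Next I would argue the second bracketed matrix, $B := \frac{1}{x^T A_+ x}\, y y^T - A_-$, lies in $\mathcal{N}_n$, i.e. is entrywise nonnegative. Its $(i,j)$ entry is $\frac{y_i y_j}{x^T A_+ x} - (A_-)_{ij}$. For $i = j$ this is $\frac{y_i^2}{x^T A_+ x} - (A_-)_{ii} \ge 0$, which is precisely the hypothesis $(x^T A_+ x)(A_-)_{ii} \le y_i^2 = [(A_+ x)_i]^2$. The off-diagonal entries need a separate estimate: since $A_- \succeq 0$, its entries satisfy $|(A_-)_{ij}| \le \sqrt{(A_-)_{ii}(A_-)_{jj}}$, so
\[
(A_-)_{ij} \le \sqrt{(A_-)_{ii}(A_-)_{jj}} \le \sqrt{\frac{y_i^2}{x^T A_+ x}\cdot\frac{y_j^2}{x^T A_+ x}} = \frac{y_i y_j}{x^T A_+ x},
\]
using $y_i, y_j > 0$ and the diagonal bound twice; hence $B_{ij} \ge 0$ for all $i,j$. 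Therefore $B \in \mathcal{N}_n$, and $A = (\text{PSD matrix}) + B \in \mathcal{S}_n^+ + \mathcal{N}_n \subseteq \mathcal{COP}_n$, as desired.

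I do not anticipate a serious obstacle here: the argument is a short chain of Cauchy–Schwarz-type inequalities plus a clever rank-one splitting. The one place requiring a little care is the positive-semidefiniteness of $A_+ - \frac{1}{x^T A_+ x} y y^T$ when $A_+$ is only positive \emph{semi}definite rather than definite (so it is not invertible); the cleanest route is the direct inequality $(v^T A_+ x)^2 \le (v^T A_+ v)(x^T A_+ x)$, valid for the positive semidefinite bilinear form $\langle u, w\rangle_{A_+} := u^T A_+ w$, rather than a formula involving $A_+^{-1}$. A second minor point worth a sentence is that the hypotheses implicitly guarantee $x^T A_+ x > 0$ (otherwise $A_+ x$ could not have positive coordinates, since $A_+ \succeq 0$ forces $A_+ x = 0$ when $x^T A_+ x = 0$), so all divisions are legitimate. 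Everything else is bookkeeping, and the conclusion $A \in \mathcal{COP}_n$ is immediate from the membership $A \in \mathcal{S}_n^+ + \mathcal{N}_n$ via (\ref{eq:inclusion2}).
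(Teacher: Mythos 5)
Your proof is correct. Note that the paper does not actually prove this theorem; it quotes it as Theorem~2.6 of Bomze and Eichfelder \cite{aBOMZE13}, so there is no in-paper argument to compare against. That said, your argument is sound and is exactly the kind of decomposition the paper has in mind when it introduces this result as an ``LP-based sufficient condition for $A \in \mathcal{S}_n^+ + \mathcal{N}_n$'': writing $y := A_+x$ and
\[
A \;=\; \left(A_+ - \tfrac{1}{x^TA_+x}\,yy^T\right) \;+\; \left(\tfrac{1}{x^TA_+x}\,yy^T - A_-\right),
\]
the first summand is positive semidefinite by the Cauchy--Schwarz inequality $(v^TA_+x)^2 \le (v^TA_+v)(x^TA_+x)$ for the PSD form $\langle u,w\rangle_{A_+}=u^TA_+w$ (correctly avoiding any appeal to $A_+^{-1}$), and the second is entrywise nonnegative: on the diagonal this is exactly the hypothesis, and off the diagonal it follows from $(A_-)_{ij}\le\sqrt{(A_-)_{ii}(A_-)_{jj}}$ (valid since $A_-\succeq 0$) combined with the diagonal bound and $y_i,y_j>0$. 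You also correctly dispose of the potential division-by-zero issue: $x^TA_+x=0$ together with $A_+\succeq 0$ would force $A_+x=0$, contradicting the hypothesis. Your argument in fact yields the slightly stronger conclusion $A \in \mathcal{S}_n^+ + \mathcal{N}_n$, from which $A\in\mathcal{COP}_n$ follows by the inclusion chain (\ref{eq:inclusion2}), which is precisely the point the surrounding text is making about the set $\mathcal{L}_n$.
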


Consider the following LP with $O(n)$ variables and $O(n)$ constraints,
\begin{equation}
\label{eq:LP_Bomze}
\inf\{ f^Tx \mid A_+ x \geq e, \ x \in \mathbb{R}_n^+ \}
\end{equation}
where $f$ is an arbitrary vector and $e$ denotes the vector of all ones. Define the set,
\[
\label{eq:L_n}
\mathcal{L}_n :=
\{ A \in \mathcal{S}_n \mid (x^TA_+ x) (A_-)_{ii} \leq [(A_+x)_i ]^2 \ (i=1,2,\ldots,n) \ \mbox{for some feasible solution $x$ of (\ref{eq:LP_Bomze})} \}.
\]

Then Theorem \ref{theo:Bomze13} ensures that $\mathcal{L}_n \subseteq \mathcal{COP}_n$. 
The following proposition gives a characterization when the feasible set of the LP of (\ref{eq:LP_Bomze}) is empty.

\begin{proposition}[Proposition 2.7 of \cite{aBOMZE13}]
\label{prop:LP_Bomze}
The condition ${\rm ker} A_+ \cap \{x \in \mathbb{R}_n^+ \mid e^Tx =1 \} \neq \emptyset$ is equivalent to $ \{x \in \mathbb{R}_n^+ \mid A_+ x \geq e\} = \emptyset$.
\end{proposition}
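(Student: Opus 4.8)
The plan is to prove the equivalence by establishing the two implications separately; only one of them requires any real work.

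The direction ``$\ker A_+ \cap \{x \in \mathbb{R}_n^+ \mid e^Tx = 1\} \neq \emptyset \ \Rightarrow \ \{x \in \mathbb{R}_n^+ \mid A_+ x \geq e\} = \emptyset$'' is elementary. Take $x \in \ker A_+$ with $x \in \mathbb{R}_n^+$ and $e^Tx = 1$, and suppose toward a contradiction that some $y \in \mathbb{R}_n^+$ satisfies $A_+ y \geq e$. Since $x \geq 0$ componentwise and $A_+ y - e \geq 0$ componentwise, we get $x^T(A_+ y) \geq x^Te = 1$. On the other hand $A_+$ is symmetric and $A_+ x = 0$, so $x^T(A_+ y) = (A_+ x)^Ty = 0$, giving $0 \geq 1$, a contradiction. (Note this direction uses only symmetry of $A_+$, not positive semidefiniteness.)

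For the converse, assume $\{x \in \mathbb{R}_n^+ \mid A_+ x \geq e\} = \emptyset$. Rewriting the infeasible system as $-A_+ x \leq -e$ together with $-x \leq 0$ and applying a Farkas-type theorem of the alternative (solvability of $Mx \leq b$ is equivalent to $b^Tz \geq 0$ for every $z \geq 0$ with $M^Tz = 0$), infeasibility yields vectors $u, v \in \mathbb{R}_n^+$ with $-A_+ u - v = 0$ and $-e^Tu < 0$; that is, $A_+ u = -v \leq 0$, $u \geq 0$, and $e^Tu > 0$, so in particular $u \neq 0$. Now positive semidefiniteness of $A_+ = P\Lambda_+ P^T$ enters: from $u \geq 0$ and $A_+ u \leq 0$ we obtain $u^TA_+ u \leq 0$, while $A_+ \in \mathcal{S}_n^+$ forces $u^TA_+ u \geq 0$, hence $u^TA_+ u = 0$; and for a positive semidefinite matrix, $u^TA_+ u = 0$ implies $A_+ u = 0$ (factor $A_+ = B^TB$, so $\|Bu\|^2 = 0$, hence $Bu = 0$ and $A_+ u = B^T(Bu) = 0$). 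Therefore $u \in \ker A_+$ is a nonzero nonnegative vector, and $x := u/(e^Tu)$ — well defined since $e^Tu > 0$ — lies in $\ker A_+ \cap \{x \in \mathbb{R}_n^+ \mid e^Tx = 1\}$, which is thus nonempty.

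I expect the only real obstacle to be the converse direction: choosing the right form of the theorem of the alternative and keeping the sign conventions and the sign constraints on the multipliers straight, together with recognizing the two places where $A_+ \in \mathcal{S}_n^+$ is indispensable (without it, $A_+ u \leq 0$ with $u \geq 0$ need not give $A_+ u = 0$, and the proposition fails). The remaining steps are routine linear algebra.
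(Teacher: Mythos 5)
The paper does not actually prove this proposition; it merely cites it as Proposition 2.7 of Bomze and Eichfelder (2013), so there is no in-paper proof to compare your attempt against. That said, your argument is correct. The forward implication is handled properly and, as you observe, uses only symmetry of $A_+$: if $x \in \ker A_+$ with $x \geq 0$ and $e^Tx = 1$, and if some $y \geq 0$ satisfied $A_+ y \geq e$, then $0 = (A_+ x)^T y = x^T A_+ y \geq x^T e = 1$, a contradiction. The converse is a clean application of the Gale-type theorem of the alternative for a system $Mx \leq b$: your choice $M = \bigl(\begin{smallmatrix}-A_+ \\ -I\end{smallmatrix}\bigr)$, $b = \bigl(\begin{smallmatrix}-e \\ 0\end{smallmatrix}\bigr)$ correctly encodes $\{x \geq 0 \mid A_+ x \geq e\}$, infeasibility produces $u, v \geq 0$ with $A_+ u = -v \leq 0$ and $e^T u > 0$, and you then use $A_+ \in \mathcal{S}_n^+$ twice (for $u^T A_+ u \geq 0$ and for $u^T A_+ u = 0 \Rightarrow A_+ u = 0$) to conclude $u \in \ker A_+$, after which $u/(e^T u)$ is the required point of the simplex. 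The sign conventions in the alternative system are kept straight, and your remark that positive semidefiniteness is indispensable only in the converse direction is accurate.
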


Consider the matrix, 
\[
A = \left[
\begin{array}{rr}
1&-1\\
-1&1\\
\end{array}
\right] 
\in \mathcal{S}_2^+.
\]
Thus, $A_+ = A$, and the set ${\rm ker} A_+ \cap \{x \in \mathbb{R}_n^+ \mid e^Tx =1 \} \neq \emptyset$. Proposition \ref{prop:LP_Bomze} ensures that $A \not\in \mathcal{L}_2$, and hence, $\mathcal{S}_n^+ \not \subseteq \mathcal{L}_n$ for $n \geq 2$, similarly to the set $\mathcal{H}_n$ for $n \geq 3$ (see Theorem \ref{theo:H_n}).

\section{Semidefinite bases}
\label{sec:SDbasis}
In this section, we improve the subcone $\mathcal{G}_n$ in terms of {\bf P2}. For a given matrix $A$ of (\ref{eq:diagonalization}), the linear optimization problem $\mbox{(LP)}_{P,\Lambda}$ in (\ref{eq:LP}) can be solved in order to find a nonnegative matrix that is a linear combination
\[
\sum_{i=1}^n \omega_i p_ip_i^T
\]
of $n$ linearly independent positive semidefinite matrices $p_ip_i^T \in \mathcal{S}_n^+ \ (i=1,2,\ldots,n)$. This is done by decomposing $A \in \mathcal{S}_n$ into two parts:
\begin{equation}
\label{eq:decomposition p}
A = \sum_{i=1}^n (\lambda_i - \omega_i) p_ip_i^T + \sum_{i=1}^n \omega_i p_ip_i^T
\end{equation}
such that the first part
\[
\sum_{i=1}^n (\lambda_i - \omega_i) p_ip_i^T
\]
is positive semidefinite. 
Since $p_ip_i^T \in \mathcal{S}_n^+ \ (i=1,2,\ldots,n)$ are only $n$ linearly independent matrices in $n(n+1)/2$ dimensional space $\mathcal{S}_n$, the intersection of the set of linear combinations of $p_ip_i^T$ and the nonnegative cone $\mathcal{N}_n$ may not have a nonzero volume even if it is nonempty. On the other hand, if we have a set of positive semidefinite matrices $p_ip_i^T \in \mathcal{S}_n^+ \ (i=1,2,\ldots,n(n+1)/2)$ that gives a basis of $\mathcal{S}_n$, then the corresponding intersection becomes the nonnegative cone $\mathcal{N}_n$ itself, and we may expect a greater  chance of finding a nonnegative matrix by enlarging the feasible region of $\mbox{(LP)}_{P,\Lambda}$. In fact, we can easily find a basis of $\mathcal{S}_n$ consisting of $n(n+1)/2$ semidefinite matrices from $n$ given orthogonal vectors $p_i \in \mathbb{R}^n \ (i=1,2,\ldots,n)$ based on the following result from \cite{aDICKINSON11}.

\begin{proposition}[Lemma 6.2 of \cite{aDICKINSON11}]
\label{theo:SDbasisI}
Let $v_i \in \mathbb{R}^n (i=1,2,\ldots,n)$ be $n$-dimensional linear independent vectors. Then the set
$\mathcal{V} := \{(v_i + v_j)(v_i + v_j)^T \mid 1 \leq i \leq j \leq n\}$ is a set of $n(n+1)/2$ linearly independent positive semidefinite matrices.
Therefore, the set $\mathcal{V}$ gives a basis of the set $\mathcal{S}_n$ of $n \times n$ symmetric matrices.
\end{proposition}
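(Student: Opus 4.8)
The plan is to establish two things: that the $n(n+1)/2$ matrices in $\mathcal{V}$ are positive semidefinite, and that they are linearly independent; since $\dim \mathcal{S}_n = n(n+1)/2$, linear independence immediately gives that $\mathcal{V}$ is a basis. The first point is trivial: each $(v_i+v_j)(v_i+v_j)^T$ is a rank-one symmetric matrix of the form $ww^T$ with $w = v_i+v_j$, and any such matrix satisfies $d^T(ww^T)d = (w^Td)^2 \geq 0$, hence lies in $\mathcal{S}_n^+$. So the entire content is linear independence.

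For linear independence I would exploit the fact that we may change basis freely. Since $v_1,\dots,v_n$ are linearly independent, there is an invertible matrix $M \in \mathbb{R}^{n\times n}$ with $v_i = Me_i$, where $e_1,\dots,e_n$ is the standard basis. Then $(v_i+v_j)(v_i+v_j)^T = M(e_i+e_j)(e_i+e_j)^TM^T$, and the map $X \mapsto MXM^T$ is a linear automorphism of $\mathcal{S}_n$ (its inverse is $X \mapsto M^{-1}X M^{-T}$). A linear automorphism carries linearly independent sets to linearly independent sets, so it suffices to prove the claim for the standard basis vectors, i.e.\ to show that $\{(e_i+e_j)(e_i+e_j)^T \mid 1 \leq i \leq j \leq n\}$ is linearly independent.

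For that reduced problem, write $E_{ij} = e_ie_j^T + e_je_i^T$ for $i<j$ and $E_{ii} = e_ie_i^T$; these are the ``standard'' symmetric coordinate matrices and are patently linearly independent (they have disjoint supports up to symmetry, in the sense that $E_{ij}$ is the unique member whose $(i,j)$ entry is nonzero, for $i<j$). Now expand $(e_i+e_j)(e_i+e_j)^T = e_ie_i^T + e_je_j^T + e_ie_j^T + e_je_i^T = E_{ii} + E_{jj} + E_{ij}$ for $i<j$, while $(e_i+e_i)(e_i+e_i)^T = 4E_{ii}$. Thus the change-of-coordinates matrix $T$ expressing the $\mathcal{V}$-vectors (for standard $v_i$) in terms of $\{E_{ij}\}_{i\le j}$ is, after reordering so the diagonal indices come first, block lower-triangular: the diagonal-to-diagonal block is $4I_n$ (or $I_n$, depending on normalization) and the off-diagonal-to-off-diagonal block is $I_{n(n-1)/2}$, with a nonzero rectangular block coupling them only one way. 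Such a triangular matrix has nonzero determinant, so $T$ is invertible and the $\mathcal{V}$-set is a basis; pushing back through $X \mapsto MXM^T$ concludes the proof.

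I do not anticipate a genuine obstacle here — the statement is essentially bookkeeping. The one place to be slightly careful is the reduction via $M$: one must note that $X \mapsto MXM^T$ is not merely linear but bijective on $\mathcal{S}_n$ precisely because $M$ is invertible, so that it preserves linear independence in both directions; and one should be careful that counting works (exactly $n(n+1)/2$ matrices, matching $\dim\mathcal{S}_n$), so that linear independence is equivalent to spanning and either yields ``basis.'' An alternative to the explicit triangular-matrix argument, if one prefers to avoid coordinates entirely, is a direct contradiction: suppose $\sum_{i\le j} c_{ij}(e_i+e_j)(e_i+e_j)^T = 0$; reading off the $(k,\ell)$ entry for $k<\ell$ gives $c_{k\ell}=0$ at once, and then reading off the $(k,k)$ entry gives $c_{kk}=0$. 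Either route is short; I would include the explicit expansion $(e_i+e_j)(e_i+e_j)^T = E_{ii}+E_{jj}+E_{ij}$ since it makes the triangular structure transparent.
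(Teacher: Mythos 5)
The paper does not supply a proof here: it simply cites the result as Lemma~6.2 of Dickinson~\cite{aDICKINSON11}, so there is no in-paper argument to compare against. Your proof is correct and self-contained. Positive semidefiniteness is immediate as you say; the reduction to standard basis vectors via the linear automorphism $X \mapsto MXM^T$ of $\mathcal{S}_n$ (bijective because $M$ is invertible, hence preserving linear independence in both directions) is clean; and the expansion $(e_i+e_j)(e_i+e_j)^T = E_{ii}+E_{jj}+E_{ij}$ for $i<j$, $(2e_i)(2e_i)^T = 4E_{ii}$, makes the change-of-basis matrix visibly block triangular with invertible diagonal blocks $4I_n$ and $I_{n(n-1)/2}$. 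One cosmetic remark: with the ordering you describe (diagonal indices listed first, coefficients-as-columns convention) the matrix comes out block \emph{upper} triangular rather than lower, but that does not affect the conclusion. The alternative coordinate-free argument you sketch, reading off $c_{k\ell}=0$ from the $(k,\ell)$ entry for $k<\ell$ and then $c_{kk}=0$ from the diagonal entries, is equally valid and arguably shorter. Since $|\mathcal{V}| = n(n+1)/2 = \dim\mathcal{S}_n$, linear independence indeed yields a basis.
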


The above proposition ensures that the following set $\mathcal{B}_+(p_1,p_2,\ldots,p_n)$ is a basis of $n \times n$ symmetric matrices.

\begin{definition}
[Semidefinite basis type I]
\label{def:SDbasisI}
For a given set of $n$-dimensional orthogonal vectors $p_i \in \mathbb{R}^n (i=1,2,\ldots,n)$, define the map $\Pi_+: \mathbb{R}^n \times \mathbb{R}^n \rightarrow \mathcal{S}_n^+$ by
\begin{equation}
\label{eq:Pi type I}
\Pi_+(p_i, p_j) := \frac{1}{4}(p_i + p_j)(p_i + p_j)^T.
\end{equation}
We call the set 
\begin{equation}
\label{eq:SDPbasis+}
\mathcal{B}_+(p_1,p_2,\ldots,p_n) := \{ \Pi_+(p_i,p_j) \mid 1 \leq i \leq j \leq n \}
\end{equation}
a {\rm semidefinite basis type I} induced by $p_i \in \mathbb{R}^n (i=1,2,\ldots,n)$.
\end{definition}

A variant of the semidefinite basis type I is as follows. Noting that the equivalence 
\[
\Pi_+(p_i,p_j) = \frac{1}{2}p_ip_i^T+ \frac{1}{2}p_jp_j^T - \Pi_-(p_i,p_j)
\]
holds for any $i \neq j$, we see that $\mathcal{B}_-(p_1,p_2,\ldots,p_n)$ is also a basis of $n \times n$ symmetric matrices.

\begin{definition}
[Semidefinite basis type II]
\label{def:SDbasisII}
For a given set of $n$-dimensional orthogonal vectors $p_i \in \mathbb{R}^n (i=1,2,\ldots,n)$, define the map $\Pi_+: \mathbb{R}^n \times \mathbb{R}^n \rightarrow \mathcal{S}_n^+$ by
\begin{equation}
\label{eq:Pi type II}
\Pi_-(p_i, p_j) := \frac{1}{4}(p_i - p_j)(p_i - p_j)^T.
\end{equation}
We call the set 
\begin{equation} 
\label{eq:SDPbasis-}
\mathcal{B}_-(p_1,p_2,\cdots,p_n) := 
\{ \Pi_+(p_i,p_i) \mid 1 \leq i \leq n \} \cup
\{ \Pi_-(p_i,p_j) \mid 1 \leq i < j \leq n \}
\end{equation}
a {\em semidefinite basis type II} induced by $p_i \in \mathbb{R}^n (i=1,2,\ldots,n)$.
\end{definition}

Using the map $\Pi_+$ in (\ref{eq:Pi type I}), the linear optimization problem $\mbox{(LP)}_{P,\Lambda}$ in (\ref{eq:LP}) can be equivalently written as
\[
\mbox{(LP)}_{P,\Lambda} \ 
\left|
\begin{array}{lll}
\mbox{Maximize} & \alpha & \\
\mbox{subject to} & \omega_{ii}^+ \leq \lambda_i & (i=1,2,\ldots,n) \\
& \displaystyle{ \left[ \sum_{k=1}^n \omega_{kk}^+ \Pi_+(p_k,p_k) \right]_{ij} \geq \alpha} & (1 \leq i \leq j \leq n).
\end{array}
\right.
\]

The problem $\mbox{(LP)}_{P,\Lambda}$ is based on the decomposition (\ref{eq:decomposition p}). Starting with (\ref{eq:decomposition p}), the matrix $A$ can be decomposed using $\Pi_+(p_i,p_j)$ in (\ref{eq:Pi type I}) and $\Pi_-(p_i,p_j)$ in (\ref{eq:Pi type II}) as
\begin{eqnarray}
\nonumber
A & = &
\sum_{i=1}^n (\lambda_i-\omega^+_{ii}) \Pi_+(p_i, p_i) + \sum_{i=1}^n \omega^+_{ii} \Pi_+(p_i, p_i) \\
\nonumber
&= & 
\sum_{i=1}^n (\lambda_i-\omega^+_{ii}) \Pi_+(p_i, p_i) + \sum_{i=1}^n \omega^+_{ii} \Pi_+(p_i, p_i) \\
& & \ \ \
\label{eq:decomposition+}
+ \sum_{1 \leq i < j \leq n} (- \omega^+_{ij}) \Pi_+(p_i, p_j) + \sum_{1 \leq i < j \leq n} \omega^+_{ij} \Pi_+(p_i, p_j) \\
&= & 
\nonumber
\sum_{i=1}^n (\lambda_i-\omega^+_{ii}) \Pi_+(p_i, p_i) + \sum_{i=1}^n \omega^+_{ii} \Pi_+(p_i, p_i) \\
& & \ \ \
\nonumber
+ \sum_{1 \leq i < j \leq n} (- \omega^+_{ij}) \Pi_+(p_i, p_j) + \sum_{1 \leq i < j \leq n} \omega^+_{ij} \Pi_+(p_i, p_j) \\
& & \ \ \
\label{eq:decomposition+-}
+ \sum_{1 \leq i < j \leq n} (- \omega^-_{ij}) \Pi_-(p_i, p_j) + \sum_{1 \leq i < j \leq n} \omega^-_{ij} \Pi_-(p_i, p_j) .
\end{eqnarray}

On the basis of the decomposition (\ref{eq:decomposition+}) and (\ref{eq:decomposition+-}), we devise the following two linear optimization problems as extensions of $\mbox{(LP)}_{P,\Lambda}$:
\begin{eqnarray}
& & \label{eq:LP+} \
\mbox{(LP)}_{P,\Lambda}^+ \
\left|
\begin{array}{lll}
\mbox{Maximize} & \alpha & \\
\mbox{subject to} & \omega^+_{ii} \leq \lambda_i & (i=1,2,\ldots,n) \\
& \omega^+_{ij} \leq 0 & (1 \leq i < j \leq n) \\
& \displaystyle{ \left[ \sum_{1 \leq k \leq l \leq n} \omega^+_{kl} \Pi_+(p_k,p_l) \right]_{ij} \geq \alpha} & (1 \leq i \leq j \leq n)
\end{array} 
\right. \\
\nonumber
& & \\
\nonumber 
& & \mbox{(LP)}_{P,\Lambda}^{\pm} \
\left|
\begin{array}{lll}
\mbox{Maximize} & \alpha & \\
\mbox{subject to} & \omega^+_{ii} \leq \lambda_i & (i=1,2,\ldots,n) \\
& \omega^+_{ij} \leq 0, \ \omega^-_{ij} \leq 0 & (1 \leq i < j \leq n) \\
& \displaystyle{ \left[ \sum_{1 \leq k \leq l \leq n} \omega^+_{kl} \Pi_+(p_k,p_l) 
+ \sum_{1 \leq k < l \leq n} \omega^-_{kl} \Pi_-(p_k,p_l) \right]_{ij} \geq \alpha} & (1 \leq i \leq j \leq n) 
\end{array}
\right. \\
& & \label{eq:LP+-}
\end{eqnarray}

Problem $\mbox{(LP)}_{P,\Lambda}^+$ has $n(n+1)/2+1$ variables and $n(n+1)$ constraints, and problem $\mbox{(LP)}_{P,\Lambda}^{\pm}$ has $n^2 +1$ variables and $n(3n+1)/2$ constraints (see Table 1
). Since $[P\Omega P^T]_{ij}$ in (\ref{eq:LP}) is given by $\left[ \sum_{k=1}^n \omega_{kk} \Pi_+(p_k,p_k) \right]_{ij}$, we can prove that both linear optimization problems $\mbox{(LP)}_{P,\Lambda}^+$ and $\mbox{(LP)}_{P,\Lambda}^{\pm}$ are feasible and bounded by making arguments similar to the one for $\mbox{(LP)}_{P,\Lambda}$ on page \pageref{description:LP}. Thus, $\mbox{(LP)}_{P,\Lambda}^+$ and $\mbox{(LP)}_{P,\Lambda}^{\pm}$ have optimal solutions with corresponding optimal values $\alpha_*^+(P, \Lambda)$ and $\alpha_*^{\pm}(P, \Lambda)$.

If the optimal value $\alpha_*^+(P, \Lambda)$ of $\mbox{(LP)}_{P,\Lambda}^+$ is nonnegative, then, by rearranging (\ref{eq:decomposition+}), the optimal solution $\omega^{+*}_{ij} \ (1 \leq i \leq j \leq n)$ can be made to give the following decomposition:
\[
A = 
\left[ \sum_{i=1}^n (\lambda_i-\omega^{+*}_{ii}) \Pi_+(p_i, p_i) + \sum_{1 \leq i < j \leq n} (- \omega^{+*}_{ij}) \Pi_+(p_i, p_j) \right] 
+ 
\left[ \sum_{1 \leq i \leq j \leq n} \omega^{+*}_{ij} \Pi_+(p_i, p_j) \right] 
\in \mathcal{S}_n^+ + \mathcal{N}_n.
\]
In the same way, if the optimal value $\alpha_*^{\pm}(P, \Lambda)$ of $\mbox{(LP)}_{P,\Lambda}^{\pm}$ is nonnegative, then, by rearranging (\ref{eq:decomposition+-}), the optimal solution $\omega^{+*}_{ij} \ (1 \leq i \leq j \leq n)$, $\omega^{-*}_{ij} \ (1 \leq i < j \leq n)$ can be made to give the following decomposition: 
\begin{eqnarray*}
A & = &
\left[ \sum_{i=1}^n (\lambda_i-\omega^{+*}_{ii}) \Pi_+(p_i, p_i) + \sum_{1 \leq i < j \leq n} (- \omega^{+*}_{ij}) \Pi_+(p_i, p_j) + \sum_{1 \leq i < j \leq n} (- \omega^{-*}_{ij}) \Pi_-(p_i, p_j) \right] \\
& & \ \ \ \ \
+ 
\left[ \sum_{1 \leq i \leq j \leq n} \omega^{+*}_{ij} \Pi_+(p_i, p_j) 
+ \sum_{1 \leq i < j \leq n} \omega^{-*}_{ij} \Pi_-(p_i, p_j) \right] 
\ \ \in \mathcal{S}_n^+ + \mathcal{N}_n.
\end{eqnarray*}
On the basis of the above observations, we can define new subcones of $\mathcal{S}_n^+ + \mathcal{N}_n$ in a similar manner as (\ref{eq:G_n}) and (\ref{eq:G_n hat}).

For a given $A \in \mathcal{S}_n$, define the following four sets of pairs of matrices
\begin{equation}
\label{eq:PL(A)}
\begin{array}{rcl}
\mathcal{PL}_{\mathcal{F}_n^{+}}(A) & := & \{ (P, \Lambda) \in {\cal O}_n \times {\cal D}_n \mid \mbox{$P$ and $\Lambda$ satisfy (\ref{eq:diagonalization}) and $\alpha_*^{+}(P,\Lambda) \geq 0$} \} \\
\mathcal{PL}_{\mathcal{F}_n^{\pm}}(A) & := & \{ (P, \Lambda) \in {\cal O}_n \times {\cal D}_n \mid \mbox{$P$ and $\Lambda$ satisfy (\ref{eq:diagonalization}) and $\alpha_*^{\pm}(P,\Lambda) \geq 0$} \} \\
\mathcal{PL}_{\widehat{\mathcal{F}_n^{+}}}(A) & := & \{ (P, \Lambda) \in \mathbb{R}^{n\times n} \times {\cal D}_n \mid \mbox{$P$ and $\Lambda$ satisfy (\ref{eq:diagonalization}) and $\alpha_*^{+}(P,\Lambda) \geq 0$} \} \\
\mathcal{PL}_{\widehat{\mathcal{F}_n^{\pm}}}(A) & := & \{ (P, \Lambda) \in \mathbb{R}^{n\times n} \times {\cal D}_n \mid \mbox{$P$ and $\Lambda$ satisfy (\ref{eq:diagonalization}) and $\alpha_*^{\pm}(P,\Lambda) \geq 0$} \}
\end{array}
\end{equation}
where $\alpha_*^{+}(P,\Lambda)$ and $\alpha_*^{\pm}(P,\Lambda)$ are optimal values of $\mbox{(LP)}_{P,\Lambda}^+$ and $\mbox{(LP)}_{P,\Lambda}^{\pm}$, respectively.
Using the above sets, we define new subcones of  $\mathcal{S}_n^n + \mathcal{N}_n$ as follows:
\begin{equation}
\label{eq:F_n}
\renewcommand\arraystretch{1.3}
\begin{array}{rcl}
\mathcal{F}_n^{+} & := & \{A \in \mathcal{S}_n \mid \mathcal{PL}_{\mathcal{F}_n^{+}}(A) \neq \emptyset \}, \\
\mathcal{F}_n^{\pm} & := & \{A \in \mathcal{S}_n \mid \mathcal{PL}_{\mathcal{F}_n^{\pm}}(A) \neq \emptyset \}, \\
\widehat{\mathcal{F}_n^{+}} & := & \{A \in \mathcal{S}_n \mid \mathcal{PL}_{\widehat{\mathcal{F}_n^{+}}}(A) \neq \emptyset \}, \\
 \widehat{\mathcal{F}_n^{\pm}} & := &  \{A \in \mathcal{S}_n \mid \mathcal{PL}_{\widehat{\mathcal{F}_n^{\pm}}}(A) \neq \emptyset \}.
\end{array}
\renewcommand\arraystretch{\default}
\end{equation}

From the construction of problems $\mbox{(LP)}_{P,\Lambda}$, $\mbox{(LP)}_{P,\Lambda}^+$ and $\mbox{(LP)}_{P,\Lambda}^{\pm}$, and the definitions (\ref{eq:PL(A)}) and (\ref{eq:F_n}), we can easily see that
\[
\mathcal{G}_n
\subseteq \mathcal{F}_n^{+} 
\subseteq \mathcal{F}_n^{\pm}, \ \ \ \ \
\widehat{\mathcal{G}_n}
\subseteq \widehat{\mathcal{F}_n^{+}}
\subseteq  \widehat{\mathcal{F}_n^{\pm}}, \ \ \ \ \
\mathcal{F}_n^{+} \subseteq \widehat{\mathcal{F}_n^{+}}, \ \ \ \ \
\mathcal{F}_n^{\pm} \subseteq  \widehat{\mathcal{F}_n^{\pm}}
\]
hold. The corollary below follows from (iv)-(vi) of Theorem \ref{theo:G_n} and the above inclusions. 

\begin{corollary}
\label{coro:G and F}
\begin{description}
\item[(i)]
\[
\begin{array}{ccccccccc}
\mathcal{S}_n^+ \cup \mathcal{N}_n
& \subseteq &
\mathcal{G}_n = \mbox{\rm com}(\mathcal{S}_n^+ + \mathcal{N}_n)
& \subseteq &
\widehat{\mathcal{G}_n}
& \subseteq &
\mathcal{S}_n^+ + \mathcal{N}_n \\
& &
\rotatebox{270}{$\subseteq$}
& & 
\rotatebox{270}{$\subseteq$}
& & \\
\mathcal{S}_n^+ \cup \mathcal{N}_n
& \subseteq &
\mathcal{F}_n^{+} 
& \subseteq &
\widehat{\mathcal{F}_n^{+}}
& \subseteq &
\mathcal{S}_n^+ + \mathcal{N}_n \\
& &
\rotatebox{270}{$\subseteq$}
& & 
\rotatebox{270}{$\subseteq$}
& & \\
\mathcal{S}_n^+ \cup \mathcal{N}_n
& \subseteq &
\mathcal{F}_n^{\pm} 
& \subseteq &
 \widehat{\mathcal{F}_n^{\pm}}
& \subseteq &
\mathcal{S}_n^+ + \mathcal{N}_n \\
\end{array}
\]
\item[(ii)] If $n = 2$, then each of the sets $\mathcal{F}_n^{+}$, $\widehat{\mathcal{F}_n^{+}}$, $\mathcal{F}_n^{\pm}$, and $ \widehat{\mathcal{F}_n^{\pm}}$ coincides with $\mathcal{S}_n^+ + \mathcal{N}_n$.
\item[(iii)] The convex hull of each of the sets $\mathcal{F}_n^{+}$, $\widehat{\mathcal{F}_n^{+}}$,  $\mathcal{F}_n^{\pm}$, and $ \widehat{\mathcal{F}_n^{\pm}}$ is $\mathcal{S}_n^+ + \mathcal{N}_n$.
\end{description}
\end{corollary}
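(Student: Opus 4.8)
The plan is to prove Corollary \ref{coro:G and F} by combining three ingredients: the chain of inclusions among $\mathcal{G}_n$, $\mathcal{F}_n^{+}$, $\mathcal{F}_n^{\pm}$ (and their hatted versions) already established just before the statement, the earlier Theorem \ref{theo:G_n}, and the obvious ``sandwich'' principle that if $\mathcal{K}_1 \subseteq \mathcal{K} \subseteq \mathcal{K}_2$ and $\mathcal{K}_1 = \mathcal{K}_2$ then $\mathcal{K} = \mathcal{K}_1$. For assertion (i), I would first record that all six sets in the two new rows are sandwiched: each of $\mathcal{F}_n^{+}$, $\widehat{\mathcal{F}_n^{+}}$, $\mathcal{F}_n^{\pm}$, $\widehat{\mathcal{F}_n^{\pm}}$ lies between $\mathcal{G}_n$ (hence above $\mathcal{S}_n^+ \cup \mathcal{N}_n$, by (i) of Theorem \ref{theo:G_n}) and $\mathcal{S}_n^+ + \mathcal{N}_n$ (by the displayed inclusions). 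The horizontal inclusions within each new row are the ones $\mathcal{F}_n^{+} \subseteq \widehat{\mathcal{F}_n^{+}} \subseteq \mathcal{S}_n^+ + \mathcal{N}_n$ and $\mathcal{F}_n^{\pm} \subseteq \widehat{\mathcal{F}_n^{\pm}} \subseteq \mathcal{S}_n^+ + \mathcal{N}_n$, and the vertical inclusions ($\mathcal{G}_n \subseteq \mathcal{F}_n^{+} \subseteq \mathcal{F}_n^{\pm}$ and likewise for the hatted sets, plus $\widehat{\mathcal{G}_n} \subseteq \widehat{\mathcal{F}_n^{+}}$, etc.) are exactly the four chains displayed immediately above the corollary. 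So (i) is just a matter of assembling these pieces into the array; no new argument is needed.

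For assertion (ii), I would invoke (v) of Theorem \ref{theo:G_n}, which gives $\mathcal{S}_2^+ \cup \mathcal{N}_2 = \mathcal{S}_2^+ + \mathcal{N}_2$. Since each of the four new sets is sandwiched between these two (by (i) of the present corollary), each must coincide with $\mathcal{S}_2^+ + \mathcal{N}_2$. For assertion (iii), I would apply the convex hull to the inclusion chain $\mathcal{S}_n^+ \cup \mathcal{N}_n \subseteq \mathcal{F}_n^{+} \subseteq \mathcal{S}_n^+ + \mathcal{N}_n$ (and its three siblings). Taking $\conv$ is monotone, and by (iii) of Theorem \ref{theo:G_n} we have $\conv(\mathcal{S}_n^+ \cup \mathcal{N}_n) = \mathcal{S}_n^+ + \mathcal{N}_n$, while $\conv(\mathcal{S}_n^+ + \mathcal{N}_n) = \mathcal{S}_n^+ + \mathcal{N}_n$ because $\mathcal{S}_n^+ + \mathcal{N}_n$ is already convex (it is a Minkowski sum of two convex cones). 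Hence $\conv(\mathcal{F}_n^{+})$ is squeezed between two copies of $\mathcal{S}_n^+ + \mathcal{N}_n$ and equals it, and the same reasoning applies verbatim to $\widehat{\mathcal{F}_n^{+}}$, $\mathcal{F}_n^{\pm}$, and $\widehat{\mathcal{F}_n^{\pm}}$.

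There is essentially no hard step here: the corollary is a bookkeeping consequence of Theorem \ref{theo:G_n} and the inclusion relations derived from the construction of $\mbox{(LP)}_{P,\Lambda}$, $\mbox{(LP)}_{P,\Lambda}^{+}$, and $\mbox{(LP)}_{P,\Lambda}^{\pm}$. If anything requires a word of care, it is making the displayed inclusion chains fully rigorous — in particular, that $\mathcal{S}_n^+ \cup \mathcal{N}_n \subseteq \mathcal{F}_n^{+}$, which follows because $\mathcal{S}_n^+ \cup \mathcal{N}_n \subseteq \mathcal{G}_n \subseteq \mathcal{F}_n^{+}$, the first inclusion being (i) of Theorem \ref{theo:G_n} and the second being among the displayed inclusions — but this is already asserted in the text preceding the corollary, so the proof can simply cite it. I would therefore write the proof as: ``(i) follows by combining (iv) of Theorem \ref{theo:G_n} with the inclusions displayed above. (ii) follows from (i) and (v) of Theorem \ref{theo:G_n}. (iii) follows by applying $\conv(\cdot)$ to the inclusions in (i), using (iii) and (vi) of Theorem \ref{theo:G_n} together with the convexity of $\mathcal{S}_n^+ + \mathcal{N}_n$.''
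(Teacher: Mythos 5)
Your proposal is correct and matches the paper's own (one-line) argument, which simply cites parts (iv)--(vi) of Theorem \ref{theo:G_n} together with the inclusion chains $\mathcal{G}_n \subseteq \mathcal{F}_n^{+} \subseteq \mathcal{F}_n^{\pm}$, $\widehat{\mathcal{G}_n} \subseteq \widehat{\mathcal{F}_n^{+}} \subseteq \widehat{\mathcal{F}_n^{\pm}}$, $\mathcal{F}_n^{+} \subseteq \widehat{\mathcal{F}_n^{+}}$, $\mathcal{F}_n^{\pm} \subseteq \widehat{\mathcal{F}_n^{\pm}}$ and the containments of these sets in $\mathcal{S}_n^+ + \mathcal{N}_n$ established by the decompositions preceding the corollary. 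Your write-up makes explicit the sandwich and monotonicity-of-$\conv$ reasoning that the paper leaves implicit; there is no gap.
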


The following table summarizes the sizes of LPs (\ref{eq:LP}), (\ref{eq:LP+}), and (\ref{eq:LP+-}) that we have to solve in order to identify, respectively, 
$(P,\Lambda) \in \mathcal{PL}_{\mathcal{G}_n}(A)$ 
(or $(P,\Lambda) \in \mathcal{PL}_{\widehat{\mathcal{G}_n}}(A)$), 
$(P, \Lambda) \in \mathcal{PL}_{\mathcal{F}_n^{+}}(A)$
(or $(P, \Lambda)  \in \mathcal{PL}_{\widehat{\mathcal{F}_n^{+}}}(A)$), 
and $(P, \Lambda)  \in \mathcal{PL}_{\mathcal{F}_n^{\pm}}$ 
(or $(P, \Lambda)  \in\mathcal{PL}_{\widehat{\mathcal{F}_n^{\pm}}}(A)$).

\begin{table}[h]
\label{tab:LP}
\begin{center}
\caption{Sizes of LPs for identification}
\begin{tabular}{|c||c|c|c|}
\hline
\hline
\rule[0mm]{0mm}{5mm} 
Identification &$(P,\Lambda) \in \mathcal{PL}_{\mathcal{G}_n}(A)$  & $(P, \Lambda) \in \mathcal{PL}_{\mathcal{F}_n^{+}}(A)$ &  $(P, \Lambda)  \in \mathcal{PL}_{\mathcal{F}_n^{\pm}}$ \\
                 &(or $(P,\Lambda) \in \mathcal{PL}_{\widehat{\mathcal{G}_n}}(A)$) &(or $(P, \Lambda)  \in \mathcal{PL}_{\widehat{\mathcal{F}_n^{+}}}(A)$) &(or $(P, \Lambda)  \in\mathcal{PL}_{\widehat{\mathcal{F}_n^{\pm}}}(A)$) \\
\hline
\rule[0mm]{0mm}{4mm} 
\# of variables & $n+1$ & $n(n+1)/2 + 1$ & $n^2+1$ \\
\hline
\rule[0mm]{0mm}{4mm} 
\# of constraints & $n(n+3)/2$ & $n(n+1)$ & $n(3n+1)/2$ \\
\hline
\hline
\end{tabular}
\end{center}
\end{table}

\section{Identification of $A \in \mathcal{S}_n^+ + \mathcal{N}_n$}
\label{sec:algorithms for S+N}
In this section, we investigate the effect of using the sets  $\mathcal{G}_n$, $\mathcal{F}_n^{+}$ and $\mathcal{F}_n^{\pm}$ for identification of the fact $A \in \mathcal{S}_n^+ + \mathcal{N}_n$.

We generated random instances of $A \in \mathcal{S}_n^+ + \mathcal{N}_n$ by using the method described in Section 2 of \cite{aBOMZE13}. For an $n \times n$ matrix $B$ with entries independently drawn from a standard normal distribution, we obtained a random positive semidefinite matrix $S = BB^T$. An $n \times n$ random nonnegative matrix $N$ was constructed using $N = C - c_{\min} I_n$ with $C = F + F^T$ for a random matrix $F$ with entries uniformly distributed in $[0,1]$ and $c_{\min}$ being the minimal diagonal entry of $C$. We set $A = S + N \in \mathcal{S}_n^+ + \mathcal{N}_n$. The construction was designed so as to maintain the nonnegativity of $N$ while increasing the chance that $S + N$ would be indefinite and thereby avoid instances that are too easy.

For each instance $A \in \mathcal{S}^+_n+\mathcal{N}_n$, we used the MATLAB command ``$[P, \Lambda] = \mbox{eig} (A) $'' and obtained  $(P, \Lambda) \in \mathcal{O}_n \times \mathcal{D}_n$. 
We checked whether 
$(P, \lambda) \in \mathcal{PL}_{\mathcal{G}_n}$
 ($(P,L) \in \mathcal{PL}_{\mathcal{F}_n^{+}}$ and 
  $(P,L) \in \mathcal{PL}_{\mathcal{F}_n^{\pm}}$) 
by solving ($\mbox{LP})_{P,\Lambda}$ in (\ref{eq:LP}) ( ($\mbox{LP})_{P,\Lambda}^+$ in (\ref{eq:LP+}) and ($\mbox{LP})_{P,\Lambda}^{\pm}$ in (\ref{eq:LP+-}))
and if it held, we identified that $A \in \mathcal{G}_n$ ($A \in \mathcal{F}_n^{+}$ and $A \in \mathcal{F}_n^{\pm}$).

Table \ref{tab:identification} shows the number of matrices   (denoted by ``\#$A$'')  that were identified as $A \in \mathcal{H}_n$ ($A \in \mathcal{G}_n^{+}$, $A \in \mathcal{F}_n^{+}$, $A \in \mathcal{F}_n^{\pm}$ and $A \in \mathcal{S}_n^+ + \mathcal{N}_n$) and the average CPU time   (denoted by ``A.t.(s)''), where $1000$ matrices were generated for each $n$. 
We used a 3.07GHz Core i7 machine with 12 GB of RAM and Gurobi 6.5 for solving LPs.
Note that we performed the last identification  $A \in \mathcal{S}_n^+ + \mathcal{N}_n$ as a reference, while we used SeDuMi 1.3 with MATLAB R2015a for solving the semidefinite program (\ref{eq:SDP for S+N}).
The table yields the following observations:
\begin{itemize}
\item All of the matrices were identified as $A \in \mathcal{S}^+_n+\mathcal{N}_n$ by checking  $(P,L) \in \mathcal{PL}_{\mathcal{F}_n^{\pm}}$. The result is comparable to the one in Section 2 of \cite{aBOMZE13}.
The average CPU time for checking  $(P,L) \in \mathcal{PL}_{\mathcal{F}_n^{\pm}}$ is faster than the one for solving  the semidefinite program (\ref{eq:SDP for S+N}) when $n \geq 20$.
\item For any $n$, the number of identified matrices increases in the order of the set inclusion relation: $\mathcal{G}_n \subseteq \mathcal{F}_n^{+} \subseteq \mathcal{F}_n^{\pm}$, while the result for $\mathcal{H}_n \not \subseteq \mathcal{G}_n$ is better than the one for $\mathcal{G}_n$ when $n=10$.
\item For the sets $\mathcal{H}_n$, $\mathcal{G}_n$ and $\mathcal{F}_n^{+}$, the number of identified matrices decreases as the size of $n$ increases.
\end{itemize}

\begin{table}[h]
\caption{Results of identification of $A \in \mathcal{S}^+_n+\mathcal{N}_n$: $1000$ matrices were generated for each $n$.}
\label{tab:identification}
\begin{center}
\begin{tabular}{|c||r|r|r|r|r|r|r|r||r|r|}
\hline
\rule[0mm]{0mm}{5mm}&	\multicolumn{2}{c|}{$\mathcal{H}_n$} &	\multicolumn{2}{c|}{$\mathcal{G}_n$}&	\multicolumn{2}{c|}{$\mathcal{F}_n^{+}$}&	\multicolumn{2}{c||}{$\mathcal{F}_n^{\pm}$}&	\multicolumn{2}{c|}{$\mathcal{S}_n^+ + \mathcal{N}_n$}  \\
\cline{2-11}
\rule[0mm]{0mm}{5mm} $n$ &	\#$A$ & A.t.(s) &	\#$A$ & A.t.(s) & \#$A$ & A.t.(s) & \#$A$ & A.t.(s) & \#$A$ & A.t.(s)  \\
\hline
\hline
10&	791&0.001&		247&0.005& 		856&0.008&		1000&0.011&	1000&0.824\\
20&	16&0.001& 		20&0.013& 		719&0.121& 	1000&0.222&	1000&9.282\\
50&	0&0.003&		0&2.374& 		440&22.346&	1000&50.092&	1000&1285.981\\
\hline
\hline
\end{tabular}
\end{center}
\end{table}

\section{LP-based algorithms for testing $A \in \mathcal{COP}_n$}
\label{sec:algorithms for copositivity}
In this section, we investigate the effect of using the sets $\mathcal{F}_n^{+}$, $\widehat{\mathcal{F}_n^{+}}$, $\mathcal{F}_n^{\pm}$ and $\widehat{\mathcal{F}_n^{\pm}}$ for testing whether a given matrix $A$ is copositive by using Sponsel, Bundfuss, and D\"{u}r's algorithm \cite{aSPONSEL12}. 

\subsection{Outline of the algorithms}
\label{subsec:algorithm outline}
By defining the standard simplex $\Delta^S$ by $\Delta^S=\{x \in \mathbb{R}^n_+ \mid e^Tx =1 \}$, we can see that a given $n \times n$ symmetric matrix $A$ is copositive if and only if
\begin{eqnarray*}
x^TAx \geq 0 \ \mbox{ for all } \ x \in \Delta^S
\end{eqnarray*}
(see Lemma 1 of \cite{aBUNDFUSS08}). 
For an arbitrary simplex $\Delta$, 
a family of simplices $\mathcal{P}=\{\Delta^1, \ldots, \Delta^m\}$ is called a {\em simplicial partition} of $\Delta$ if it satisfies 
\begin{eqnarray*}
\Delta=\bigcup_{i=1}^m \Delta^i \ \mbox{and} \ 
\mbox{int}(\Delta^i)\cap\mbox{int}(\Delta^j) = \emptyset
\ \mbox{for all} \ i \neq j.
\end{eqnarray*}
Such a partition can be generated by successively bisecting simplices in the partition. For a given simplex $\Delta=\mbox{conv}\{v_1, \ldots, v_n\}$, consider the midpoint $v_{n+1}=\frac{1}{2}(v_i+v_j)$ of the edge $[v_i, v_j]$. Then the subdivision $\Delta^1=\{v_1, \ldots, v_{i-1}, v_{n+1}, v_{i+1}, \ldots, v_n\}$ and $\Delta^2=\{v_1, \ldots, v_{j-1}, v_{n+1}, v_{j+1}, \ldots, v_n\}$ of $\Delta$ satisfies the above conditions for simplicial partitions. See \cite{aHORST97} for a detailed description of simplicial partitions.

Denote the set of vertices of partition $\mathcal{P}$ by
\[
V(\mathcal{P})=\{v \mid \mbox{$v$ is a vertex of some $\Delta \in \mathcal{P}$} \}. 
\]
Each simplex $\Delta$ is determined by its vertices and can be represented by a matrix $V_\Delta$ whose columns are these vertices. Note that $V_\Delta$ is nonsingular and unique up to a permutation of its columns, which does not affect the argument \cite{aSPONSEL12}. Define the set of all matrices corresponding to simplices in partition $\mathcal{P}$ as
\[
M(\mathcal{P})=\{V_\Delta:\Delta \in \mathcal{P} \}.
\]
The ``fineness'' of a partition $\mathcal{P}$ is quantified by the maximum diameter of a simplex in $\mathcal{P}$, denoted by
\begin{equation}
\label{eq:delta(P)}
\delta(\mathcal{P})=\max_{\Delta \in \mathcal{P}}\max_{u, v \in \Delta}||u-v||.
\end{equation}

The above notation was used to show the following necessary and sufficient conditions for copositivity in \cite{aSPONSEL12}. The first theorem gives a sufficient condition for copositivity.

\begin{theorem}[Theorem 2.1 of \cite{aSPONSEL12}]
\label{theo:sufficient A in C}
If $A \in \mathcal{S}_n$ satisfies
\[
V^TAV \in \mathcal{COP}_n \ \mbox{for all} \ V \in M(\mathcal{P})
\]
then $A$ is copositive. Hence, for any $\mathcal{M}_n \subseteq \mathcal{COP}_n$, 
if $A \in \mathcal{S}_n$ satisfies
\[
V^TAV \in \mathcal{M}_n \ \mbox{for all} \ V \in M(\mathcal{P}),
\]
then $A$ is also copositive.
\end{theorem}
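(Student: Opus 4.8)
The plan is to prove Theorem~\ref{theo:sufficient A in C} in two stages, mirroring its two-part statement. First I would establish the base claim: if $V^TAV \in \mathcal{COP}_n$ for every $V \in M(\mathcal{P})$, then $A$ is copositive. The natural strategy is to use the characterization that $A$ is copositive if and only if $x^TAx \geq 0$ for all $x \in \Delta^S$ (cited via Lemma~1 of \cite{aBUNDFUSS08}). So fix an arbitrary $x \in \Delta^S$. Since $\mathcal{P} = \{\Delta^1,\ldots,\Delta^m\}$ is a simplicial partition of the standard simplex $\Delta^S$ and $\Delta^S = \bigcup_{i=1}^m \Delta^i$, there is some simplex $\Delta^i \in \mathcal{P}$ with $x \in \Delta^i$. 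Let $V = V_{\Delta^i} \in M(\mathcal{P})$ be the (nonsingular) matrix whose columns are the vertices of $\Delta^i$. Because $\Delta^i$ is a subsimplex of $\Delta^S$, its vertices lie in $\mathbb{R}^n_+$ and sum appropriately so that $x \in \Delta^i$ can be written as $x = V\lambda$ for some barycentric coordinate vector $\lambda \in \mathbb{R}^n_+$. Then $x^TAx = (V\lambda)^TA(V\lambda) = \lambda^T(V^TAV)\lambda \geq 0$, where the last inequality is exactly the hypothesis $V^TAV \in \mathcal{COP}_n$ applied to the nonnegative vector $\lambda$. Since $x \in \Delta^S$ was arbitrary, $A$ is copositive.

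The one point requiring a little care in the above is the claim that every $x \in \Delta^i$ admits a representation $x = V\lambda$ with $\lambda \geq 0$; this is just the statement that $\Delta^i = \mathrm{conv}\{\text{columns of }V_{\Delta^i}\}$, so the barycentric coordinates of $x$ relative to the vertices of $\Delta^i$ are nonnegative, and these are the components of $\lambda$. I would also note that $V_{\Delta^i}$ being nonsingular (hence the columns being affinely/linearly independent) makes $\lambda$ unique, though uniqueness is not actually needed — existence of some nonnegative $\lambda$ suffices for the inequality.

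For the second part, suppose $\mathcal{M}_n \subseteq \mathcal{COP}_n$ and that $V^TAV \in \mathcal{M}_n$ for all $V \in M(\mathcal{P})$. Then immediately $V^TAV \in \mathcal{COP}_n$ for all $V \in M(\mathcal{P})$ by the inclusion $\mathcal{M}_n \subseteq \mathcal{COP}_n$, and the first part applies verbatim to conclude that $A$ is copositive. This step is essentially trivial once the first part is in hand.

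I do not anticipate a genuine obstacle here; the theorem is cited from \cite{aSPONSEL12} and the argument is elementary. The only ``hard part'' is being precise about the geometry: connecting $x \in \Delta^i$ to a nonnegative combination $x = V_{\Delta^i}\lambda$, and observing that the vertices of every simplex in the partition of $\Delta^S$ themselves lie in $\mathbb{R}^n_+$ (which holds because bisection of a simplex contained in $\Delta^S$ produces midpoints that again lie in $\Delta^S \subseteq \mathbb{R}^n_+$). Once that is spelled out, the quadratic-form manipulation $\lambda^T(V^TAV)\lambda = (V\lambda)^TA(V\lambda)$ closes the proof.
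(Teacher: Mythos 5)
Your proof is correct. The paper itself does not prove this theorem---it is stated with a citation to Theorem~2.1 of \cite{aSPONSEL12} and taken as given---but the argument you give is precisely the standard one from that reference: take $x\in\Delta^S$, locate a simplex $\Delta^i\in\mathcal{P}$ containing it, express $x=V_{\Delta^i}\lambda$ with barycentric coordinates $\lambda\ge 0$, and push $x^TAx=\lambda^T(V_{\Delta^i}^TAV_{\Delta^i})\lambda\ge 0$ through the copositivity of $V_{\Delta^i}^TAV_{\Delta^i}$; the second assertion then follows immediately from the inclusion $\mathcal{M}_n\subseteq\mathcal{COP}_n$. The one subsidiary fact you invoke---that copositivity of $A$ is equivalent to $x^TAx\ge 0$ on $\Delta^S$---is exactly the characterization the paper records (Lemma~1 of \cite{aBUNDFUSS08}), so nothing is left dangling.
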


The above theorem implies that by choosing $\mathcal{M}_n = \mathcal{N}_n$ (see (\ref{eq:inclusion2})), $A$ is copositive if $V^T_\Delta AV_\Delta \in \mathcal{N}_n$ holds for any $\Delta \in \mathcal{P}$.

\begin{theorem}[Theorem 2.2 of \cite{aSPONSEL12}]
\label{theo:detect A in intC finitely}
Let $A \in \mathcal{S}_n$ be strictly copositive, i.e., $A \in \inter(\mathcal{COP}_n)$. Then there exists $\varepsilon > 0$ such that for all partitions $\mathcal{P}$ of $\Delta^S$ with $\delta(\mathcal{P}) < \varepsilon$, we have 
\[
V^TAV \in \mathcal{N}_n \ \mbox{for all} \ V \in M(\mathcal{P}).
\]
\end{theorem}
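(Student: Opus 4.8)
The plan is to exploit the strict copositivity of $A$ to obtain a uniform positive lower bound on the quadratic form over the standard simplex, and then translate that bound into nonnegativity of the entries of $V^TAV$ once the simplices are fine enough. First I would use $A \in \inter(\mathcal{COP}_n)$ to produce a constant $m > 0$ with $x^TAx \geq m$ for all $x \in \Delta^S$; this follows by continuity of $x \mapsto x^TAx$ on the compact set $\Delta^S$ together with the fact that an interior point of $\mathcal{COP}_n$ has quadratic form bounded away from $0$ on the (compact) simplex. (One clean way: strict copositivity is equivalent to $x^TAx > 0$ for all $x \in \Delta^S$, and the minimum of a continuous function on a compact set is attained.) Next I would record a Lipschitz-type estimate: for $u, v \in \Delta^S$, $|u^TAu - v^TAv| \leq L\,\|u-v\|$ where $L$ depends only on $A$ and the (bounded) diameter of $\Delta^S$, say $L = 2\|A\|\,\mathrm{diam}(\Delta^S)$, using $u^TAu - v^TAv = (u-v)^TA(u+v)$.

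The key observation is that the $(i,j)$ entry of $V^TAV$ for $V = V_\Delta$ with columns $v_1,\dots,v_n$ is exactly $v_i^TAv_j$, and its diagonal entries are $v_k^TAv_k \geq m > 0$. For the off-diagonal entries, I would write $v_i^TAv_j = \tfrac14\bigl[(v_i+v_j)^TA(v_i+v_j) - (v_i-v_j)^TA(v_i-v_j)\bigr]$, or more directly observe that $(v_i+v_j)/2 \in \Delta^S$ (since $\Delta^S$ is convex and $v_i,v_j$ are vertices of a sub-simplex of $\Delta^S$), so $(v_i+v_j)^TA(v_i+v_j) \geq 4m$, while $\|v_i - v_j\| \leq \delta(\mathcal{P})$ gives $|(v_i-v_j)^TA(v_i-v_j)| \leq \|A\|\,\delta(\mathcal{P})^2$. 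Hence $v_i^TAv_j \geq m - \tfrac14\|A\|\,\delta(\mathcal{P})^2$, which is nonnegative as soon as $\delta(\mathcal{P})^2 \leq 4m/\|A\|$. Choosing $\varepsilon = 2\sqrt{m/\|A\|}$ (and $\varepsilon$ smaller than anything needed to keep midpoints in $\Delta^S$, which is automatic here) then forces every entry of $V^TAV$ to be nonnegative, i.e. $V^TAV \in \mathcal{N}_n$ for all $V \in M(\mathcal{P})$.

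The main obstacle — really the only delicate point — is making sure the relevant midpoints and vertices genuinely lie in $\Delta^S$ so that the lower bound $m$ applies to them; this is where one must use that every $\Delta \in \mathcal{P}$ is a sub-simplex of $\Delta^S$, hence all its vertices lie in $\Delta^S$ and, by convexity, so do the averages $(v_i+v_j)/2$. Once that is in place the argument is a routine continuity/compactness estimate, and the explicit $\varepsilon$ above can be quoted (or one may simply invoke uniform continuity of $x\mapsto x^TAx$ on $\Delta^S$ to get $\varepsilon$ without computing constants). I would also remark that this simultaneously reproves Theorem \ref{theo:sufficient A in C}'s specialization $\mathcal{M}_n = \mathcal{N}_n$ gives a terminating test for strictly copositive matrices, which is presumably the point of stating the result.
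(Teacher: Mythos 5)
Your proof is correct. Note that the paper does not actually prove this statement---it quotes it as Theorem 2.2 of \cite{aSPONSEL12}---so there is no internal proof to compare against, but your argument is self-contained and sound. Compactness of $\Delta^S$ together with the standard characterization of $\inter(\mathcal{COP}_n)$ (namely $A \in \inter(\mathcal{COP}_n)$ iff $x^TAx > 0$ for all nonzero $x \in \mathbb{R}^n_+$) gives a uniform $m > 0$ with $x^TAx \ge m$ on $\Delta^S$. The polarization identity $4\,v_i^TAv_j = (v_i+v_j)^TA(v_i+v_j) - (v_i-v_j)^TA(v_i-v_j)$ then handles the off-diagonal entries cleanly: the midpoint $(v_i+v_j)/2 \in \Delta^S$ by convexity bounds the first term below by $4m$, while $\|v_i-v_j\| \le \delta(\mathcal{P})$ makes the second term at most $\|A\|\,\delta(\mathcal{P})^2$ in absolute value, so $v_i^TAv_j \ge m - \tfrac14\|A\|\,\delta(\mathcal{P})^2$. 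Diagonal entries are immediate since $v_k \in \Delta^S$. Your explicit threshold $\varepsilon = 2\sqrt{m/\|A\|}$ (with $\|A\|$ the spectral norm) is valid. One small editorial remark: the Lipschitz-type estimate you set up in the first paragraph is never actually invoked once you commit to the polarization route; you could drop it, or keep it only as the alternative uniform-continuity argument you allude to at the end.
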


The above theorem ensures that if $A$ is strictly copositive (i.e., $A \in \inter(\mathcal{COP}_n)$), the copositivity of $A$ (i.e., $A \in \mathcal{COP}_n$) can be detected in finitely many iterations of an algorithm employing a subdivision rule with $\delta(\mathcal{P}) \rightarrow 0$. A similar result can be obtained for the case $A \not\in \mathcal{COP}_n$, as follows.

\begin{lemma}[Lemma 2.3 of \cite{aSPONSEL12}]
\label{lemm:detect A not in C finitely}
The following two statements are equivalent.
\begin{enumerate}
\item $A \notin \mathcal{COP}_n$
\item There is an $\varepsilon > 0$ such that for any partition $\mathcal{P}$ with
 $\delta(\mathcal{P})<\varepsilon$, there exists a vertex $v \in V(\mathcal{P})$ such that $v^TAv<0$.
\end{enumerate}
\end{lemma}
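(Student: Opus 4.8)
The plan is to establish the two implications separately. The implication $(2)\Rightarrow(1)$ is essentially immediate: fix an $\varepsilon>0$ as in statement (2) and choose any simplicial partition $\mathcal{P}$ of $\Delta^S$ with $\delta(\mathcal{P})<\varepsilon$ (such partitions exist by repeatedly bisecting edges). By (2) there is a vertex $v\in V(\mathcal{P})$ with $v^TAv<0$. Every vertex of a simplex occurring in a partition of $\Delta^S$ lies in $\Delta^S$, so $v\in\mathbb{R}^n_+$ and $v\neq 0$ (since $e^Tv=1$); hence $v^TAv<0$ with $v\in\mathbb{R}^n_+$, which is exactly the statement $A\notin\mathcal{COP}_n$.

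For $(1)\Rightarrow(2)$ the idea is a uniform continuity estimate for the quadratic form on a small simplex. Assume $A\notin\mathcal{COP}_n$, so there is $d\in\mathbb{R}^n_+$ with $d^TAd<0$; then $d\neq 0$, hence $e^Td>0$, and after rescaling $\barx:=d/(e^Td)\in\Delta^S$ we still have $\barx^TA\barx=:-c$ with $c>0$ (in particular $A\neq 0$). I will take $\varepsilon:=c/(2\|A\|)$, where $\|A\|$ denotes the spectral norm. Let $\mathcal{P}$ be any simplicial partition of $\Delta^S$ with $\delta(\mathcal{P})<\varepsilon$. Since $\Delta^S=\bigcup_{\Delta\in\mathcal{P}}\Delta$, there is a simplex $\Delta=\conv\{v_1,\ldots,v_n\}\in\mathcal{P}$ with $\barx\in\Delta$. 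For each vertex $v_i$, both $v_i$ and $\barx$ belong to $\Delta$, so $\|v_i-\barx\|\le\delta(\mathcal{P})$ by (\ref{eq:delta(P)}); also $v_i,\barx\in\Delta^S\subseteq\mathbb{R}^n_+$ gives $\|v_i\|\le\|v_i\|_1=1$ and $\|\barx\|\le 1$, hence $\|v_i+\barx\|\le 2$. The key estimate is then
\[
|v_i^TAv_i-\barx^TA\barx|=|(v_i-\barx)^TA(v_i+\barx)|\le\|A\|\,\|v_i-\barx\|\,\|v_i+\barx\|\le 2\|A\|\,\delta(\mathcal{P})<c,
\]
which yields $v_i^TAv_i<\barx^TA\barx+c=0$ for every vertex $v_i$ of $\Delta$. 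In particular there is a vertex $v\in V(\mathcal{P})$ with $v^TAv<0$, establishing (2).

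The part to handle carefully — though it is a mild point rather than a serious obstacle — is that one cannot pass from $\barx^TA\barx<0$ to $v_i^TAv_i<0$ by a direct convex-combination argument, because expanding $\barx=\sum_i\mu_iv_i$ produces cross terms $v_i^TAv_j$ whose signs are uncontrolled; the resolution is precisely the observation that \emph{all} vertices of a simplex of small diameter containing $\barx$ are uniformly close to $\barx$, combined with the Lipschitz-type bound above on the bounded set $\Delta^S$. It also has to be recorded that $d\neq 0$ (otherwise $d^TAd=0$) forces $e^Td>0$, so the normalization to $\Delta^S$ is legitimate, and that $A\neq 0$ (otherwise $A$ would be copositive), so that $\varepsilon=c/(2\|A\|)$ is a well-defined positive number.
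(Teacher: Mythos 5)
The paper states this lemma only as a citation to Sponsel, Bundfuss, and D\"{u}r and does not reproduce the argument, so there is no in-text proof here to compare against. Your proof is correct: the direction $(2)\Rightarrow(1)$ is handled properly by noting that vertices of the partition lie in $\Delta^S\subseteq\mathbb{R}^n_+$, and the direction $(1)\Rightarrow(2)$ is settled by the factorization $v^TAv-\barx^TA\barx=(v-\barx)^TA(v+\barx)$ (valid by symmetry of $A$) together with the uniform bounds $\|v-\barx\|\le\delta(\mathcal{P})$ and $\|v+\barx\|\le 2$ on $\Delta^S$, giving the explicit threshold $\varepsilon=c/(2\|A\|)$; this is the standard continuity argument, made quantitative, and you correctly flag that a naive convex-combination expansion of $\barx$ in the vertices would not work because of the uncontrolled cross terms.
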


The following algorithm, from \cite{aSPONSEL12}, is based on the above three results. 

\begin{algorithm} 
\caption{Sponsel, Bundfuss, and D\"{u}r's algorithm to test copositivity} 
\label{alg:copositive1} 
\begin{algorithmic}[1] 
\REQUIRE $A \in \mathcal{S}_n, \mathcal{M}_n \subseteq \mathcal{COP}_n$
\ENSURE ``$A$ is copositive'' or ``$A$ is not copositive''
\STATE $\mathcal{P} \leftarrow \{\Delta^S\} $;
\WHILE{$\mathcal{P} \neq \emptyset$}
\STATE Choose $\Delta \in \mathcal{P}$;
\IF{$v^TAv < 0$ for some $v \in V(\{\Delta\})$:}
\label{line:not copositive1} 
\STATE \textbf{return} ``$A$ is not copositive'';
\ENDIF
\IF{we identify $V_\Delta^TAV_\Delta \in \mathcal{M}_n$}
\label{line:check M_n1}
\STATE $\mathcal{P} \leftarrow \mathcal{P} \setminus \{\Delta\}$;
\label{line:remove simplex1}
\ELSE
\STATE Partition $\Delta$ into $\Delta = \Delta^1 \cup \Delta^2$;
\label{line:refinement1}
\STATE $\mathcal{P} \leftarrow 
\mathcal{P}\setminus \{\Delta\} \cup \{\Delta^1, \Delta^2\}$;
\ENDIF
\ENDWHILE
\STATE \textbf{Return} ``$A$ is copositive'';
\end{algorithmic}
\end{algorithm}

As we have already observed, Theorem \ref{theo:detect A in intC finitely} and Lemma \ref{lemm:detect A not in C finitely} imply the following corollary.

\begin{corollary}
\label{coro:termination}
\begin{enumerate}
\item If $A$ is strictly copositive, i.e., $A \in \inter(\mathcal{COP}_n)$, then Algorithm \ref{alg:copositive1} terminates finitely, returning ``A is copositive.''
\item If $A$ is not copositive, i.e., $A \not\in \mathcal{COP}_n$, then Algorithm \ref{alg:copositive1} terminates finitely, returning ``A is not copositive.''
\end{enumerate}
\end{corollary}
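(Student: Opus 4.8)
The plan is to derive both assertions directly from the preceding results—Theorem \ref{theo:detect A in intC finitely}, Lemma \ref{lemm:detect A not in C finitely}, and the structure of Algorithm \ref{alg:copositive1}—together with the basic fact that the bisection subdivision rule used in line \ref{line:refinement1} drives $\delta(\mathcal{P}) \to 0$. The two key observations are: (a) the only simplices that are ever refined (and hence persist) are those for which the membership test $V_\Delta^T A V_\Delta \in \mathcal{M}_n$ fails in line \ref{line:check M_n1} and the sign test in line \ref{line:not copositive1} does not trigger; and (b) the inclusion $\mathcal{N}_n \subseteq \mathcal{M}_n$ (which holds for every subcone $\mathcal{M}_n$ considered, by Corollary \ref{coro:G and F}(i) and Theorem \ref{theo:H_n}), so that whenever $V_\Delta^T A V_\Delta \in \mathcal{N}_n$ the simplex $\Delta$ is discarded rather than refined.

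For assertion 1, suppose $A \in \inter(\mathcal{COP}_n)$ but Algorithm \ref{alg:copositive1} does not terminate. Then it performs infinitely many bisections, producing a partition sequence with $\delta(\mathcal{P}) \to 0$; in particular, after finitely many steps every simplex $\Delta$ still in $\mathcal{P}$ has diameter below the threshold $\varepsilon$ furnished by Theorem \ref{theo:detect A in intC finitely}. For any such $\Delta$ with $V_\Delta \in M(\mathcal{P})$ we then have $V_\Delta^T A V_\Delta \in \mathcal{N}_n \subseteq \mathcal{M}_n$, so line \ref{line:check M_n1} succeeds and $\Delta$ is removed in line \ref{line:remove simplex1} instead of being partitioned. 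Hence no further refinement occurs and $\mathcal{P}$ is emptied in finitely many iterations, so the algorithm returns ``$A$ is copositive''—a contradiction. (That the returned answer is correct when the algorithm exits the while-loop is immediate from Theorem \ref{theo:sufficient A in C}, since every removed simplex satisfied $V_\Delta^T A V_\Delta \in \mathcal{M}_n \subseteq \mathcal{COP}_n$; and the algorithm cannot exit via line \ref{line:not copositive1} because $A$ copositive forces $v^T A v \ge 0$ for all $v \in \Delta^S \supseteq V(\mathcal{P})$.)

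For assertion 2, suppose $A \notin \mathcal{COP}_n$. By Lemma \ref{lemm:detect A not in C finitely} there is an $\varepsilon > 0$ such that any partition $\mathcal{P}$ with $\delta(\mathcal{P}) < \varepsilon$ contains a vertex $v \in V(\mathcal{P})$ with $v^T A v < 0$. If the algorithm did not terminate, it would again generate partitions with $\delta(\mathcal{P}) \to 0$; once $\delta(\mathcal{P}) < \varepsilon$, the partition contains such a bad vertex $v$, which belongs to some simplex $\Delta \in \mathcal{P}$, and by the time that $\Delta$ is selected in line \ref{line:not copositive1} the test $v^T A v < 0$ succeeds, so the algorithm returns ``$A$ is not copositive.'' This is the desired termination, and the answer is correct because a strictly negative quadratic value at a point of $\Delta^S$ directly witnesses $A \notin \mathcal{COP}_n$.

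The one point that needs care—and the only genuine obstacle—is justifying that the repeated bisection actually forces $\delta(\mathcal{P}) \to 0$ along every nonterminating run: a single simplex is never revisited, but one must argue that there is no infinite nested sequence of simplices whose diameters stay bounded away from zero. This is exactly the standard property of exhaustive subdivision rules (e.g. the edge-bisection rule recalled in Section \ref{subsec:algorithm outline}); I would cite \cite{aHORST97} (and the corresponding discussion in \cite{aSPONSEL12}) for the fact that bisecting the longest edge yields $\delta(\mathcal{P}) \to 0$, so that the threshold arguments above apply after finitely many iterations. With that in hand, both assertions follow as sketched.
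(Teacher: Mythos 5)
Your proposal takes essentially the same route as the paper, which disposes of the corollary in a single sentence by observing that it follows from Theorem~\ref{theo:detect A in intC finitely} and Lemma~\ref{lemm:detect A not in C finitely}; you have simply written out the contradiction argument that the paper leaves implicit, and you have correctly identified the one hypothesis that the paper does not spell out (that the subdivision rule be exhaustive, $\delta(\mathcal{P})\to 0$, citing \cite{aHORST97}) and the implicit requirement $\mathcal{N}_n\subseteq\mathcal{M}_n$, which all of the sets $\mathcal{H}_n,\mathcal{G}_n,\mathcal{F}_n^+,\mathcal{F}_n^{\pm}$ satisfy.

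One small technicality worth tightening in your treatment of assertion~2: Lemma~\ref{lemm:detect A not in C finitely} is stated for partitions of $\Delta^S$, but once the algorithm starts discarding simplices at line~\ref{line:remove simplex1}, the active collection $\mathcal{P}$ in the algorithm is no longer a partition of $\Delta^S$, so the lemma does not literally apply to it. The fix is short: a removed simplex $\Delta'$ has $V_{\Delta'}^TAV_{\Delta'}\in\mathcal{M}_n\subseteq\mathcal{COP}_n$, and since the vertices of $\Delta'$ are $V_{\Delta'}e_i$, every vertex $v$ of a removed simplex satisfies $v^TAv=[V_{\Delta'}^TAV_{\Delta'}]_{ii}\geq 0$. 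Hence the bad vertex furnished by Lemma~\ref{lemm:detect A not in C finitely} (applied to the union of active and removed simplices) can only lie in an active simplex, and that simplex is never removed by the membership test (by the same reasoning with $x^*\in\Delta^*$ and $x^*=V_{\Delta^*}y$, $y\geq 0$), so it is eventually bisected until its diameter is small enough and then selected, triggering line~\ref{line:not copositive1}. Alternatively, you can avoid Lemma~\ref{lemm:detect A not in C finitely} entirely by fixing a witness $x^*\in\Delta^S$ with $(x^*)^TAx^*<0$, noting that the simplex containing $x^*$ is never removed and that its diameter shrinks to zero, so that eventually one of its vertices lands in the neighborhood of $x^*$ where the quadratic form is negative. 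Either way your overall structure is sound and matches the paper's.
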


In this section, we investigate the effect of using the sets $\mathcal{H}_n$ from (\ref{eq:H_n}), $\mathcal{G}_n$ from (\ref{eq:G_n}), and $\mathcal{F}_n^{+}$ and $\mathcal{F}_n^{\pm}$ from (\ref{eq:F_n}) as the set $\mathcal{M}_n$ in the above algorithm.

At Line \ref{line:check M_n1}, we can check whether $V_\Delta^TAV_\Delta \in \mathcal{M}_n$ directly in the case where $\mathcal{M}_n = \mathcal{H}_n$. 
In other cases, we diagonalize $V_\Delta^TAV_\Delta$ as $V_\Delta^TAV_\Delta = P \Lambda P^T$ and check whether $(P, \Lambda) \in \mathcal{PL}_{\mathcal{M}_n}(V_\Delta^TAV_\Delta)$ according to  definitions (\ref{eq:PLG(A)}) or (\ref{eq:PL(A)}).
If the associated LP has the nonnegative optimal value, then we identify $A \in \mathcal{M}_n$.

At Line \ref{line:remove simplex1}, Algorithm \ref{alg:copositive1} removes the simplex that was determined at Line \ref{line:check M_n1} to be in no further need of exploration by Theorem \ref{theo:sufficient A in C}. The accuracy and speed of the determination influence the total computational time and depend on the choice of the set $\mathcal{M}_n \subseteq \mathcal{COP}_n$.

Here, if we choose $\mathcal{M}_n = \mathcal{G}_n$ (respectively, $\mathcal{M}_n = \mathcal{F}_n^{+}$, $\mathcal{M}_n = \mathcal{F}_n^{\pm}$), we can improve Algorithm \ref{alg:copositive1} by incorporating the set $\widehat{\mathcal{M}_n} = \widehat{\mathcal{G}_n}$ (respectively, $\widehat{\mathcal{M}_n} = \widehat{\mathcal{F}_n^{+}}$, $\widehat{\mathcal{M}_n} = \widehat{\mathcal{F}_n^{\pm}}$), as proposed in \cite{aTANAKA15}.

\begin{algorithm}
\caption{Improved version of Algorithm 1} 
\label{alg:copositive2} 
\begin{algorithmic}[1] 
\REQUIRE $A \in \mathcal{S}_n, \mathcal{M}_n \subseteq \widehat{\mathcal{M}_n} \subseteq \mathcal{COP}_n$
\ENSURE ``$A$ is copositive'' or ``$A$ is not copositive''
\STATE $\mathcal{P} \leftarrow \{\Delta^S\} $;
\WHILE{$\mathcal{P} \neq \emptyset$}
\STATE Choose $\Delta \in \mathcal{P}$;
\IF{$v^TAv < 0$ for some $v \in V(\{\Delta\})$:}
\label{line:not copositive 2} 
\STATE \textbf{Return} ``$A$ is not copositive'';
\ENDIF
\STATE Let $P$ and $\Lambda$ be matrices satisfying $A = P\Lambda P^T$;
\IF{we identify $V_\Delta^TAV_\Delta \in \widehat{\mathcal{M}_n}$ by checking  whether $(V_{\Delta}^TP, \Lambda) \in \mathcal{PL}_{\widehat{\mathcal{M}_n}}$}
\label{line:check hat M_n1}
\STATE $\mathcal{P} \leftarrow \mathcal{P} \setminus \{\Delta\}$;
\ELSE
\STATE Let $P$ and $\Lambda$ be matrices satisfying $V_\Delta^TAV_\Delta=P\Lambda P^T$;
\IF{we identify $V_\Delta^TAV_\Delta \in \mathcal{M}_n$ by checking whether $(P, \Lambda) \in \mathcal{PL}_{\mathcal{M}_n}$}
\label{line:check M_n2}
\STATE $\mathcal{P} \leftarrow \mathcal{P} \setminus \{\Delta\}$;
\ELSE
\STATE Partition $\Delta$ into $\Delta = \Delta^1 \cup \Delta^2$, and set $\widehat{\Delta} \leftarrow \{\Delta^1, \Delta^2$\};
\label{line:refinement2} 
\FOR{$p = 1,2$}
\STATE Let $\Omega^* := \Diag(\omega^*)$ where $\omega^*$ is an LP optimal solution obtained at Line \ref{line:check M_n2};
\IF{we identify $V_{\Delta^p}^TAV_{\Delta^p} \in \widehat{\mathcal{M}_n}$ by checking whether $V_{\Delta^p}^TP\Omega^*P^TV_{\Delta^p} \in \mathcal{N}_n$}
\label{line:check hat M_n2}
\STATE $\widehat{\Delta} \leftarrow \widehat{\Delta} \setminus \{\Delta^p\}$;
\ENDIF 
\ENDFOR
\STATE $\mathcal{P} \leftarrow 
\mathcal{P}\setminus \{\Delta\} \cup \widehat{\Delta}$;
\ENDIF
\ENDIF
\ENDWHILE
\STATE \textbf{return} ``$A$ is copositive'';
\end{algorithmic}
\end{algorithm}

The details of the added steps are as follows. Suppose that we have a diagonalization of the form (\ref{eq:diagonalization}).

At Line \ref{line:check hat M_n1}, we need to solve an additional LP but do not need to diagonalize $V_\Delta^TAV_\Delta$. Let $P$ and $\Lambda$ be matrices satisfying (\ref{eq:diagonalization}). Then the matrix $V_{\Delta}^TP$ can be used to diagonalize $V_\Delta^TAV_\Delta$, i.e.,
\[
V_\Delta^TAV_\Delta 
= V_\Delta^T(P\Lambda P^T) V_{\Delta}
= (V_\Delta^TP) \Lambda(V_{\Delta}^TP)^T
\]
while $V_{\Delta}^TP \in \mathbb{R}^{n \times n}$ is not necessarily orthogonal. 
Thus, we can test whether $(V_{\Delta}^TP, \Lambda) \in \mathcal{PL}_{\widehat{\mathcal{M}_n}}$ by solving the corresponding LP according to the definitions (\ref{eq:PLGhat(A)}) or (\ref{eq:PL(A)}).
If $(V_{\Delta}^TP, \Lambda) \in \mathcal{PL}_{\widehat{\mathcal{M}_n}}$ holds, then we can identify $V_\Delta^TAV_\Delta \in \widehat{\mathcal{M}_n}$ 

If $(V_{\Delta}^TP, \Lambda) \not\in \mathcal{PL}_{\widehat{\mathcal{M}_n}}$ at Line \ref{line:check hat M_n1}, we proceed to the original step to identify whether $V_\Delta^TAV_\Delta \in \mathcal{M}_n$ at Line \ref{line:check M_n2}. Similarly to Line \ref{line:check M_n1} of Algorithm \ref{alg:copositive1}, we diagonalize $V_\Delta^TAV_\Delta$ as $V_\Delta^TAV_\Delta = P\Lambda P^T$ with an orthogonal matrix $P$ and a diagonal matrix $\Lambda$.
Then we check whether $(P, \Lambda) \in \mathcal{PL}_{\mathcal{M}_n}$ by solving the corresponding    LP, and if $(P, \Lambda) \in \mathcal{PL}_{\mathcal{M}_n}$, we can identify $V_\Delta^TAV_\Delta \in \mathcal{M}_n$.

At Line \ref{line:check hat M_n2}, we don't need to diagonalize $V_{\Delta^p}^TAV_{\Delta^p}$
 or solve any more LPs. Let $\omega^* \in \mathbb{R}^n$ be an optimal solution of the corresponding LP obtained at Line \ref{line:check hat M_n1} and let $\Omega^* := \Diag(\omega^*)$. Then the feasibility of $\omega^*$ implies the positive semidefiniteness of the matrix $V_{\Delta^p}^TP(\Lambda - \Omega^*)P^TV_{\Delta^p}$. Thus, if $V_{\Delta^p}^TP\Omega^*P^TV_{\Delta^p} \in \mathcal{N}_n$, we see that
\[
V_{\Delta^p}^TAV_{\Delta^p} 
= 
V_{\Delta^p}^TP(\Lambda - \Omega^*)P^TV_{\Delta^p}
+
V_{\Delta^p}^TP\Omega^*P^TV_{\Delta^p}
\in
\mathcal{S}_n^+ + \mathcal{N}_n
\]
and that $V_{\Delta^p}^TAV_{\Delta^p} \in \widehat{\mathcal{M}_n}$.

\subsection{Numerical results}
\label{subsec:numerical results}

This subsection describes experiments for testing copositivity using $\mathcal{N}_n$,$\mathcal{H}_n$, $\mathcal{G}_n$, $\mathcal{F}_n^{+}$, $\widehat{\mathcal{F}_n^{+}}$, $\mathcal{F}_n^{\pm}$ or $\widehat{\mathcal{F}_n^{\pm}}$ as the set $\mathcal{M}_n$ in Algorithms 1 and 2.
We implemented the following seven algorithms in MATLAB R2015a on a 3.07GHz Core i7 machine with 12 GB of RAM, using Gurobi 6.5 for solving LPs:
\begin{description}
\label{algorithms}
\item[{\bf Algorithm 1.1}:] Algorithm 1 with $\mathcal{M}_n = \mathcal{N}_n$.
\item[{\bf Algorithm 1.2}:] Algorithm 1 with $\mathcal{M}_n = \mathcal{H}_n$.
\item[{\bf Algorithm 2.1}:] Algorithm 2 with $\mathcal{M}_n = \mathcal{G}_n$ and $\widehat{\mathcal{M}_n} = \widehat{\mathcal{G}_n}$.
\item[{\bf Algorithm 1.3}:] Algorithm 1 with $\mathcal{M}_n = \mathcal{F}_n^{+}$.
\item[{\bf Algorithm 2.2}:] Algorithm 2 with $\mathcal{M}_n = \mathcal{F}_n^{+}$ and $\widehat{\mathcal{M}_n} = \widehat{\mathcal{F}_n^{+}}$.
\item[{\bf Algorithm 2.3}:] Algorithm 2 with $\mathcal{M}_n = \mathcal{F}_n^{\pm}$ and $\widehat{\mathcal{M}_n} = \widehat{\mathcal{F}_n^{\pm}}$.
\item[{\bf Algorithm 1.4}:] Algorithm 1 with $\mathcal{M}_n = \mathcal{S}_n^{+}+\mathcal{N}_n$.
\end{description}


As test instances, we used the two kinds of matrices arising from the maximum clique problem (Section \ref{subsubsec:max clique}) and from standard quadratic optimization problems (Section \ref{subsubsec:standard QP}).

\subsubsection{Results for the matrix arising from the maximum clique problem}
\label{subsubsec:max clique}

In this subsection, we consider the matrix
\begin{equation}
\label{eq:B_gamma}
B_{\gamma} := \gamma (E -A_G) -E
\end{equation}
where $E \in \mathcal{S}_n$ is the matrix whose elements are all ones and the matrix $A_G \in \mathcal{S}_n$ is the adjacency matrix of a given undirected graph $G$ with $n$ nodes. The matrix $B_\gamma$ comes from the maximum clique problem. The maximum clique problem is to find a clique (complete subgraph) of maximum cardinality in $G$. It has been shown (in \cite{aDEKLERK02}) that the maximum cardinality, the so-called clique number $\omega(G)$, is equal to the optimal value of
\[
\omega(G) = \min\{ \gamma \in \mathbb{N} \mid B_{\gamma} \in \mathcal{COP}_n \}.
\]
Thus, the clique number can be found by checking the copositivity of $B_{\gamma}$ for at most $\gamma=n,n-1, \ldots, 1$.

Figure \ref{fig:graph G} 
shows the instances of $G$ that were used in \cite{aSPONSEL12}. We know the clique numbers of $G_{8}$ and $G_{12}$ are $\omega(G_{8}) = 3$ and $\omega(G_{12}) = 4$, respectively.

\begin{figure}[h]
\begin{center}
\includegraphics[width=35mm]{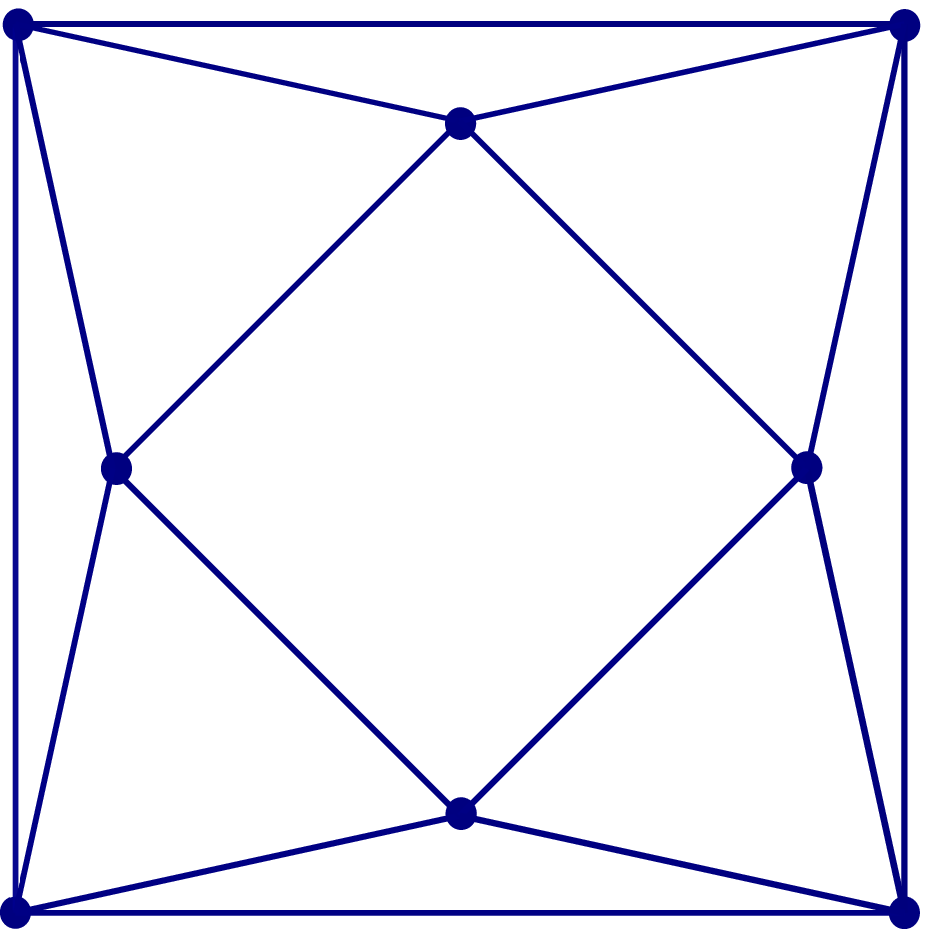}
\hspace{0.5cm}
\includegraphics[width=35mm]{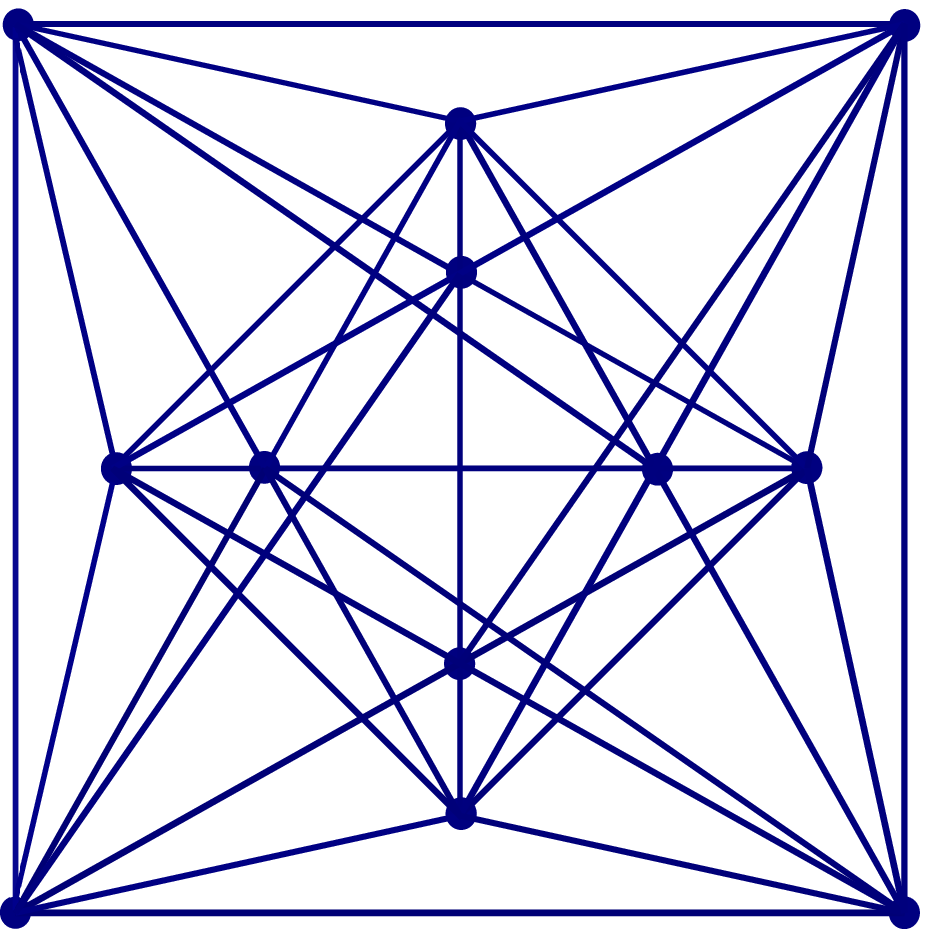}
\end{center}
\caption{Graphs $G_{8}$ with $\omega(G_{8})=3$ (left) and $G_{12}$ with $\omega(G_{12})=4$ (right).}
\label{fig:graph G}
\end{figure}

The aim of the implementation is to explore the differences in behavior when using $\mathcal{H}_n$, $\mathcal{G}_n$, $\mathcal{F}_n^{+}$, $\widehat{\mathcal{F}_n^{+}}$, $\mathcal{F}_n^{\pm}$ or $\widehat{\mathcal{F}_n^{\pm}}$ as the set $\mathcal{M}_n$ rather than to compute the clique number efficiently. Hence, the experiment examined $B_{\gamma}$ for various values of $\gamma$ at intervals of $0.1$ around the value $\omega(G)$ (see Tables \ref{tab:G8} and \ref{tab:G12} on page \pageref{tab:G8}).

As already mentioned, $\alpha_*(P,\Lambda) < 0$ ($\alpha^+_*(P,\Lambda) < 0$ and $\alpha^{\pm}_*(P,\Lambda) < 0$) with a specific $P$ does not necessarily guarantee that $A \not\in \mathcal{G}_n$ or $A \not\in \widehat{\mathcal{G}_n}$ ($A \not\in \mathcal{F}_n^{+}$ or $A \not\in \widehat{\mathcal{F}_n^{+}}$, $A \not\in \mathcal{F}_n^{\pm}$ or $A \not\in \widehat{\mathcal{F}_n^{\pm}}$). Thus, it not strictly accurate to say that we can use those sets for $\mathcal{M}_n$, and the algorithms may miss some of the $\Delta$'s that could otherwise have been removed. However, although this may have some effect on speed, it does not affect the termination of the algorithm, as it is guaranteed by the subdivision rule satisfying $\delta(\mathcal{P}) \rightarrow 0$, where $\delta(\mathcal{P})$ is defined by (\ref{eq:delta(P)}).

Tables \ref{tab:G8} and \ref{tab:G12} show the numerical results for $G_{8}$ and $G_{12}$, respectively. Both tables compare the results of 
the following seven algorithms in terms of the number of iterations (the column ``Iter.'') and the total computational time (the column ``Time (s)'' ):

The symbol ``$-$'' means that the algorithm did not terminate within 6 hours. The reason for the long computation time may come from the fact that for each graph $G$, the matrix $B_{\gamma}$ lies on the boundary of the copositive cone $\mathcal{COP}_n$ when $\gamma = \omega(G)$ ($\omega(G_8)=3$ and $\omega(G_{12})=4$).
See also Figure \ref{fig:graph G12v}, which shows a graph of the results of  {\bf Algorithms 1.2, 2.1, 2.3, and 1.4} for the graph $G_{12}$ in Table \ref{tab:G12}.

We can draw the following implications from the results in Table \ref{tab:G12} on page \pageref{tab:G12} for the larger graph $G_{12}$ (similar implications can be drawn from Table \ref{tab:G8}):
\begin{itemize}
\label{item:max clique}
\item At any $\gamma \geq 5.2$, {\bf Algorithms 2.1, 1,3, 2.2, 2.3, and 1.4} terminate in one iteration, and the execution times of  {\bf Algorithms 2.1, 1.3, 2.2, and 2.3} are much shorter than those of {\bf Algorithms 1.1, 1.2, or 1.4}.
\item The lower bound of $\gamma$ for which the algorithm terminates in one iteration and the one for which the algorithm terminates in 6 hours decrease in going from {\bf Algorithm 1.3} to {\bf Algorithm 3.1}. The reason may be that, as shown in Corollary \ref{coro:G and F}, the set inclusion relation $\mathcal{G}_n \subseteq \mathcal{F}_n^{+} \subseteq \mathcal{F}_n^{\pm} \subseteq  \mathcal{S}_n^{+}+\mathcal{N}_n$ holds.
\item Table 1 
 on page \pageref{tab:LP} summarizes the sizes of the LPs for identification. The results here imply that the computational times for solving an LP have the following magnitude relationship for any $n \geq 3$:
\[
\mbox{\bf Algorithm 2.1} < 
\mbox{\bf Algorithm 1.3} < 
\mbox{\bf Algorithm 2.2} < 
\mbox{\bf Algorithm 2.3} .
\]
On the other hand, the set inclusion relation $\mathcal{G}_n \subseteq \mathcal{F}_n^{+} \subseteq \mathcal{F}_n^{\pm}$ and the construction of Algorithms 1 and 2 imply that the detection abilities of the algorithms also follow the relationship described above and that the number of iterations has the reverse relationship for any $\gamma$s in Table \ref{tab:G12}:
\[
\mbox{\bf Algorithm 2.1} >
\mbox{\bf Algorithm 1.3} > 
\mbox{\bf Algorithm 2.2} >
\mbox{\bf Algorithm 2.3} .
\]
It seems that the order of the number of iterations has a stronger influence on the total computational time than the order of the computational times for solving an LP.
\item At each $\gamma \in [4.1, 4.9]$, the number of iterations of {\bf Algorithm 2.3} is much larger than one hundred times those of {\bf Algorithm 1.4}. This means that the total computational time of {\bf Algorithm 2.3} is longer than that of {\bf Algorithm 1.3} at each $\gamma \in [4.1, 4.9]$, while {\bf Algorithm 1.4} solves a semidefinite program of size $O(n^2)$ at each iteration.
\item At each $\gamma < 4$, the algorithms show no significant differences in terms of the number of iterations. The reason may be that they all work to find a $v \in V(\{\Delta\})$ such that $v^T(\gamma (E-A_G)-E)v<0$, while their computational time depends on the choice of simplex refinement strategy.
\end{itemize}

In view of the above observations, we conclude that Algorithm 2.3 with the choices $\mathcal{M}_n = \mathcal{F}_n^{\pm}$ and $\widehat{\mathcal{M}_n} = \widehat{\mathcal{F}_n^{\pm}}$ might be a way to check the copositivity of a given matrix $A$ when $A$ is strictly copositive.

The above results are in contrast with those of Bomze and Eichfelder in \cite{aBOMZE13}, where the authors show the number of iterations required by their algorithm for testing copositivity of matrices of the form (\ref{eq:B_gamma}).
On the contrary to the first observation described above, their algorithm terminates with few iterations when $\gamma < \omega(G)$, i.e., the corresponding matrix is not copositive, and it requires a huge number of iterations otherwise.

It should be noted that Table \ref{tab:G8} shows an interesting result concerning the non-convexity of the set $\mathcal{G}_n$, while we know that $\conv(\mathcal{G}_n) = \mathcal{S}_n^+ + \mathcal{N}_n$ (see Theorem \ref{theo:G_n}). \label{page:nonconvex}
 Let us look at the result at $\gamma = 4.0$ of {\bf Algorithm 2.1}. The multiple iterations at $\gamma = 4.0$ imply that we could not find $B_{4.0} \in \mathcal{G}_n$ at the first iteration for a certain orthogonal matrix $P$ satisfying (\ref{eq:diagonalization}). 
Recall that the matrix $B_{\gamma}$ is given by (\ref{eq:B_gamma}). It follows from $E - A_G \in \mathcal{N}_n \subseteq \mathcal{G}_n$ and from the result at $\gamma = 3.5$ in Table \ref{tab:G8} that
\[
0.5 (E - A_G) \in \mathcal{G}_n \ \mbox{and} \
B_{3,5} = 3.5 (E -A_G) -E \in \mathcal{G}_n.
\]
Thus, the fact that we could not determine whether the matrix
\[
B_{4.0} = 4.0 (E - A_G) - E = 0.5 (E - A_G) + B_{3.5}
\]
lies in the set $\mathcal{G}_n$ suggests that the set $\mathcal{G}_n = \mbox{com}(\mathcal{S}_n^+,  \mathcal{N}_n)$ is not convex.

\subsubsection{Results for the matrix arising from standard quadratic optimization problems}
\label{subsubsec:standard QP}


In this subsection, we consider the matrix
\begin{equation}
\label{eq:C_gamma}
C_{\gamma} := Q - \gamma E
\end{equation}
where $E \in \mathcal{S}_n$ is the matrix whose elements are all ones and $Q \in \mathcal{S}_n$ is an arbitrary symmetric matrix, not necessarily positive semidefinite.
The matrix $C_\gamma$ comes from standard quadratic optimization problems of the form, 
\begin{equation}
\begin{array}{ll}
\label{eq:QP}
\mbox{Minimize} & x^TQx \\
\mbox{subject to} & x \in \Delta^S := \{ x \in \mathbb{R}^n_+ \mid e^Tx = 1\}. \\
\end{array}
\end{equation}
In \cite{aBOMZE00}, it is shown that the optimal value of the problem 
\[
p^*(Q) = \max\{ \gamma \in \mathbb{R} \mid C_{\gamma} \in \mathcal{COP}_n \}.
\]
is equal to the optimal value of (\ref{eq:QP}).

The instances of the form (\ref{eq:QP}) were generated using the procedure {\em random\_qp} in \cite{NOWAK98} with two quartets of parameters $(n, s, k, d) = (10, 5, 5. 0.5)$ and  $(n, s, k, d) = (20, 10, 10. 0.5)$, where the parameter $n$ implies the size of $Q$, i.e., $Q$ is an $n \times n$ matrix.
It has been shown in \cite{NOWAK98} that {\em random\_qp} generates problems, for which we know the optimal value and a global minimizer a priori for each.
We set the optimal value as $-10$ for each quartet of parameters.

Tables \ref{tab:n10} and \ref{tab:n20} show the numerical results for $(n, s, k, d) = (10, 5, 5, 0.5)$  and $(n, s, k, d) = (20, 10, 10, 0.5)$. We generated $2$ instances for each quartet of parameters and performed the seven algorithms on page \pageref{algorithms} for these instances.
Both tables compare the average values of the seven algorithms in terms of the number of iterations (the column ``Iter.'') and the total computational time (the column ``Time (s)'' ):
the symbol ``$-$'' means that the algorithm did not terminate within 30 minutes. 
In each table, we made the interval between the values $\gamma$ smaller as $\gamma$ got closer to the optimal value, to observe the behavior around the optimal value more precisely. 

From the results in Tables  \ref{tab:n10} and \ref{tab:n20} on page \pageref{tab:n10}, 
we can draw implications that are very similar to those for the maximum clique problem, listed on page \pageref{item:max clique} (we hence, omitted discussing them here).
A major difference from the implications for the maximum clique problem is that {\bf Algorithm 1.2} using the set $\mathcal{H}_n$ is efficient for solving a small ($n=10$) standard quadratic problem, while it cannot solve the problem within 30 minutes when $n=20$ and $\gamma \geq -10.3125$.

\section{Concluding remarks}
\label{sec:concluding remarks}
In this paper, we investigated the properties of several tractable subcones of $\mathcal{S}_n^+ + \mathcal{N}_n$ and summarized the results (as Figures \ref {fig: subcones H G} and \ref{fig: Convex subcones H G}). We also devised new subcones of $\mathcal{S}_n^+ + \mathcal{N}_n$ by introducing the {\em semidefinite basis (SD basis) } defined as in Definitions \ref{def:SDbasisI} and \ref{def:SDbasisII}. We conducted numerical experiments using those subcones for identification of given matrices $A \in \mathcal{S}_n^+ + \mathcal{N}_n$ and for testing the copositivity of matrices arising from the maximum clique problem and from standard quadratic optimization problems. We have to solve LPs with $O(n^2)$ variables and $O(n^2)$ constraints in order to detect whether a given matrix belongs to those cones, and the computational cost is substantial. However, the numerical results shown in Tables \ref{tab:identification}, \ref{tab:G8}, \ref{tab:G12} and \ref{tab:n20} show that the new subcones are promising not only for identification of $A \in \mathcal{S}_n^+ + \mathcal{N}_n$ but also for testing copositivity.

Recently, Ahmadi, Dash and Hall \cite{AHMADI15} developed algorithms for inner approximating the cone of positive semidefinite matrices, wherein they focused on the set $\mathcal{D}_n \subseteq \mathcal{S}_n^+$ of $n \times n$ diagonal dominant matrices. Let $U_{n,k}$ be the set of vectors in $\mathbb{R}^n$ that have at most $k$ nonzero components, each equal to $\pm 1$, and define 
\[
\mathcal{U}_{n,k} := \{ uu^T \mid u \in U_{n,k} \}.
\]
Then, as the authors indicate, the following theorem has already been proven.
\begin{theorem}[Theorem 3.1 of \cite{AHMADI15}, Barker and Carlson \cite{aBARKER75}]
\[
\mathcal{D}_n = \mbox{\em cone}(\mathcal{U}_{n,k})
:= \left\{ \sum_{i=1}^{|\mathcal{U}_{n,k}|} \alpha_i U_i \mid U_i \in \mathcal{U}_{n,k}, \ \ \alpha_i \geq 0 \ (i=1, \ldots, |\mathcal{U}_{n,k}|) \right\}
\]
\end{theorem}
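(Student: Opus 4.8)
The plan is to prove the asserted identity by establishing the two set inclusions $\mathrm{cone}(\mathcal{U}_{n,2}) \subseteq \mathcal{D}_n$ and $\mathcal{D}_n \subseteq \mathrm{cone}(\mathcal{U}_{n,2})$, the relevant parameter value being $k = 2$. Recall for this that $U_{n,2}$ consists exactly of the standard unit vectors $e_i$ $(1 \le i \le n)$ together with the vectors $e_i \pm e_j$ $(1 \le i < j \le n)$, so that $\mathcal{U}_{n,2}$ is the set of rank-one matrices $e_i e_i^T$ and $(e_i \pm e_j)(e_i \pm e_j)^T$, and that a symmetric matrix $A = (a_{\ell m})$ belongs to $\mathcal{D}_n$ precisely when $a_{\ell\ell} \ge \sum_{m \neq \ell} |a_{\ell m}|$ for every $\ell$.

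For the inclusion $\mathrm{cone}(\mathcal{U}_{n,2}) \subseteq \mathcal{D}_n$, I would first check that each generator of $\mathcal{U}_{n,2}$ is itself diagonally dominant: $e_i e_i^T$ has a single nonzero entry, which sits on the diagonal, while $(e_i \pm e_j)(e_i \pm e_j)^T$ has diagonal entries equal to $1$ in positions $(i,i)$ and $(j,j)$ and a single off-diagonal entry $\pm 1$ in each of rows $i$ and $j$, so the defining inequality holds with equality in those two rows and trivially (both sides zero) in the remaining rows. I would then observe that $\mathcal{D}_n$ is a convex cone: it is clearly closed under multiplication by nonnegative scalars, and it is closed under addition because, for $A, B \in \mathcal{D}_n$, the triangle inequality gives $(A+B)_{\ell\ell} = A_{\ell\ell} + B_{\ell\ell} \ge \sum_{m \neq \ell}|A_{\ell m}| + \sum_{m \neq \ell}|B_{\ell m}| \ge \sum_{m \neq \ell}|(A+B)_{\ell m}|$ for every $\ell$. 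Consequently every finite nonnegative combination of generators lies in $\mathcal{D}_n$.

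For the reverse inclusion $\mathcal{D}_n \subseteq \mathrm{cone}(\mathcal{U}_{n,2})$, I would produce an explicit conic decomposition of an arbitrary $A = (a_{\ell m}) \in \mathcal{D}_n$. For each pair $i < j$ let $u_{ij} := e_i + e_j$ if $a_{ij} \ge 0$ and $u_{ij} := e_i - e_j$ if $a_{ij} < 0$; then the matrix $|a_{ij}|\, u_{ij}u_{ij}^T$ contributes exactly $a_{ij}$ to the $(i,j)$ and $(j,i)$ entries and $|a_{ij}|$ to each of the diagonal entries $(i,i)$ and $(j,j)$. Hence $\sum_{i < j} |a_{ij}|\, u_{ij}u_{ij}^T$ reproduces every off-diagonal entry of $A$ and places $\sum_{m \neq i} |a_{im}|$ in the $(i,i)$ position. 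Because $A$ is diagonally dominant, the residual diagonal coefficients $d_i := a_{ii} - \sum_{m \neq i}|a_{im}|$ are nonnegative, so adding $\sum_{i=1}^n d_i\, e_i e_i^T$ recovers $A$ exactly, now written as a nonnegative combination of members of $\mathcal{U}_{n,2}$; this shows $A \in \mathrm{cone}(\mathcal{U}_{n,2})$.

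I do not expect a genuine obstacle here — the whole argument is elementary — and the only two points that call for a little care are the sign bookkeeping in the explicit decomposition (selecting $e_i + e_j$ or $e_i - e_j$ according to the sign of $a_{ij}$) and the appeal to the triangle inequality that makes diagonal dominance stable under addition. One caveat worth recording is that the identity is special to $k = 2$: for $k \ge 3$ one only gets $\mathcal{D}_n \subsetneq \mathrm{cone}(\mathcal{U}_{n,k}) \subseteq \mathcal{S}_n^+$, since, for example, $(e_1 + e_2 + e_3)(e_1 + e_2 + e_3)^T \in \mathcal{U}_{n,3}$ is positive semidefinite but fails to be diagonally dominant; so $k = 2$ is the value meant in the statement.
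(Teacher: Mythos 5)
Your proof is correct and complete. Note, however, that there is no in-paper proof to compare it with: the paper simply records this result as a cited theorem (Theorem~3.1 of Ahmadi--Dash--Hall, attributed to Barker and Carlson) and supplies no argument of its own, so you are filling a genuine gap rather than reproducing the paper's reasoning. What you give is the standard elementary proof of the Barker--Carlson characterization, and both inclusions are handled correctly: each generator of $\mathcal{U}_{n,2}$ is diagonally dominant, $\mathcal{D}_n$ is a convex cone because the triangle inequality makes diagonal dominance stable under addition, and the explicit decomposition $A=\sum_{i<j}|a_{ij}|\,u_{ij}u_{ij}^T+\sum_i d_i\,e_ie_i^T$, with $u_{ij}=e_i\pm e_j$ chosen according to the sign of $a_{ij}$ and $d_i=a_{ii}-\sum_{m\neq i}|a_{im}|\ge 0$, exhibits any $A\in\mathcal{D}_n$ as a nonnegative combination of members of $\mathcal{U}_{n,2}$. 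Your closing caveat is also a genuine, if minor, correction to the paper as printed: the theorem there is typeset with $\mathcal{U}_{n,k}$ but $k$ is never fixed in the surrounding text, and the equality holds only for $k=2$; for $k\ge 3$ the cone $\mathrm{cone}(\mathcal{U}_{n,k})$ strictly contains $\mathcal{D}_n$, as your example $(e_1+e_2+e_3)(e_1+e_2+e_3)^T$ shows, while remaining inside $\mathcal{S}_n^+$.
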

From the above theorem, we can see that for the SDP bases $\mathcal{B}_+(p_1, p_2, \cdots, p_n)$ in (\ref{eq:SDPbasis+}), $\mathcal{B}_-(p_1, p_2, \cdots, p_n)$ in (\ref{eq:SDPbasis-}) and $n$-dimensional unit vectors $e_1, e_2, \cdots, e_n$, the following set inclusion relation holds:
\[
\mathcal{B}_+(e_1, e_2, \cdots, e_n) \cup \mathcal{B}_-(e_1, e_2, \cdots, e_n) \subseteq \mathcal{D}_n = \mbox{cone}(\mathcal{U}_{n,k}).
\]
These sets should be investigated in the future. 

\section*{Acknowledgments}

The authors would like to sincerely thank the anonymous reviewers for their thoughtful and valuable comments which have significantly improved the paper.
Among others, one of the reviewers pointed out that Proposition \ref{theo:SDbasisI} is Lemma 6.2 of \cite{aDICKINSON11}, suggested to revise the title of the paper, and the definitions of the sets $\mathcal{G}_n$, $\mathcal{F}_n^{+}$, etc., to be more accurate, and gave the second example in (\ref{eq:examples}).

\begin{landscape}
\begin{table}[h]
\caption{Results for $B_{\gamma}$ with $G_{8}$}
\label{tab:G8}
\begin{center}
{\fontsize{6.4pt}{8pt}\selectfont
\begin{tabular}{|c||r|r|r|r|r|r|r|r|r|r|r|r|r|r|}
\hline
\hline
\rule[0mm]{0mm}{5mm}&	\multicolumn{2}{c|}{\bf Alg.~1.1~{$(\mathcal{N}_n)$}} &	\multicolumn{2}{c|}{\bf Alg.~1.2~{($\mathcal{H}_n$)}}&	\multicolumn{2}{c|}{\bf Alg.~2.1~{($\mathcal{G}_n$ , $\widehat{\mathcal{G}_n}$)}}& \multicolumn{2}{c|}{\bf Alg.~1.3~{($\mathcal{F}_n^{+}$)}}& \multicolumn{2}{c|}{\bf Alg.~2.2~{($\mathcal{F}_n^{+}$ , $\widehat{\mathcal{F}_n^{+}}$)}}&\multicolumn{2}{c|}{\bf Alg.~2.3~{($\mathcal{F}_n^{\pm}$ , $\widehat{\mathcal{F}_n^{\pm}}$)}}&\multicolumn{2}{c|}{\bf Alg.~1.4~{($\mathcal{S}_n^+$ + $\mathcal{N}_n$)}}\\
\cline{2-15}
\rule[0mm]{0mm}{5mm} $\gamma$&	\multicolumn{1}{c|}{Iter.}&	\multicolumn{1}{c|}{Time(s)}&	\multicolumn{1}{c|}{Iter.}& \multicolumn{1}{c|}{Time(s)}& \multicolumn{1}{c|}{Iter.}&	\multicolumn{1}{c|}{Time(s)}&	\multicolumn{1}{c|}{Iter.}&	\multicolumn{1}{c|}{Time(s)}&	\multicolumn{1}{c|}{Iter.}&	\multicolumn{1}{c|}{Time(s)} &	\multicolumn{1}{c|}{Iter.}&	\multicolumn{1}{c|}{Time(s)}&	\multicolumn{1}{c|}{Iter.}&\multicolumn{1}{c|}{Time(s)}\\
\hline
\hline
1.5&		1&0.001&			1&0.001&				1&0.004&				1&0.004&				1&0.006&				1&0.007&			1&0.177\\
1.6&		1&0.001&			1&0.001&				1&0.004&				1&0.004&				1&0.006&				1&0.008&			1&0.139\\
1.7&		1&0.001&			1&0.001&				1&0.004&				1&0.004&				1&0.006&				1&0.007&			1&0.180\\
1.8&		1&0.001&			1&0.001&				1&0.004&				1&0.004&				1&0.006&				1&0.007&			1&0.151\\
1.9&		1&0.001&			1&0.001&				1&0.004&				1&0.004&				1&0.006&				1&0.007&			1&0.225\\
2.0&		159&0.018&		159&0.021&			159&0.454&			159&0.368&			159&0.690&			159&0.964&		159&31.782\\
2.1&		159&0.019&		159&0.020&			159&0.455&			159&0.362&			159&0.689&			159&0.976&		159&31.237\\
2.2&		159&0.019&		159&0.020&			159&0.454&			159&0.363&			159&0.698&			159&0.971&		159&31.352\\
2.3&		159&0.019&		159&0.020&			159&0.452&			159&0.364&			159&0.689&			159&0.964&		159&30.115\\
2.4&		159&0.019&		159&0.020&			159&0.450&			159&0.362&			159&0.685&			159&0.957&		159&30.354\\
2.5&		159&0.019&		159&0.020&			159&0.453&			159&0.364&			159&0.690&			159&0.964&		159&29.681\\
2.6&		159&0.019&		159&0.020&			159&0.453&			159&0.361&			159&0.684&			159&0.953&		159&30.060\\
2.7&		2942&0.495&		2422&0.346&			2687&7.313&			2553&5.623&			2461&9.772&			2307&12.480&		1613&293.441\\
2.8&		2942&0.492&		2246&0.301&			2463&7.197&			1951&4.524&			1811&7.355&			1635&8.731&		1251&448.201\\
2.9& 	2942&0.501&		1606&0.191&			2139&6.270&			1493&3.469&			1393&5.458&			1309&6.867&		1251&449.572\\
3.0&		-&-&			-&-&				-&-& 				-&-&				-&-& 				-&-& 			-&-\\
3.1&		263548&261.285&			3003&0.279&			5885&14.603&			1827&3.864&			1357&4.879&			503&2.394&		7&3.186\\
3.2&		255202&243.819&			1509&0.132&			3129&7.830&			911&1.980&			377&1.347&			201&0.976&		3&1.480\\
3.3&		70814&24.332&			469&0.040&			2229&5.549&			447&0.968&			249&0.918&			111&0.538&		3&1.352\\
3.4&		70814&23.735&			395&0.034&			1603&4.112&			291&0.625&			167&0.650&			53&0.254	&		3&1.401\\
3.5&		70814&23.821&			369&0.031&			1&0.003&				1&0.003&				1&0.004&				1&0.004&			1&0.322\\
3.6&		70814&24.304&			209&0.017&			1&0.002&				1&0.003&				1&0.004&				1&0.004&			1&0.362\\
3.7&		70814&24.302&			115&0.009&			1&0.002&				1&0.003&				1&0.004&				1&0.004&			1&0.371\\
3.8&		70814&23.744&			79&0.007&			1&0.002&				1&0.003&				1&0.004&				1&0.004&			1&0.359\\
3.9&		70814&24.101&			63&0.005&			1&0.002&				1&0.003&				1&0.003&				1&0.005&			1&0.322\\
4.0&		70814&24.242&			47&0.004&			227&0.593&			1&0.003&				1&0.003&				1&0.005&			1&0.360\\
4.1&		4660&0.431&			23&0.002&			1&0.003&				1&0.003&				1&0.003&				1&0.005&			1&0.324\\
4.2&		4660&0.434&			17&0.002&			1&0.005&				1&0.003&				1&0.003&				1&0.005&			1&0.330\\
4.3&		4660&0.432&			17&0.002&			1&0.005&				1&0.003&				1&0.003&				1&0.005&			1&0.324\\
4.4&		4660&0.433&			7&0.001&				1&0.005&				1&0.003&				1&0.003&				1&0.005&			1&0.328\\
4.5&		4660&0.435&			7&0.001&				1&0.005&				1&0.003&				1&0.003&				1&0.006&			1&0.258\\
\hline
\hline
\end{tabular}
}
\end{center}
\end{table}
\end{landscape}

\begin{landscape}
\begin{table}[h]
\caption{Results for $B_{\gamma}$ with $G_{12}$}
\label{tab:G12}
\begin{center}
{\fontsize{5.4pt}{7pt}\selectfont
\begin{tabular}{|c||r|r|r|r|r|r|r|r|r|r|r|r|r|r|}
\hline
\hline
\rule[0mm]{0mm}{5mm}&	\multicolumn{2}{c|}{\bf Alg.~1.1~{$(\mathcal{N}_n)$}} &	\multicolumn{2}{c|}{\bf Alg.~1.2~{($\mathcal{H}_n$)}}&	\multicolumn{2}{c|}{\bf Alg.~2.1~{($\mathcal{G}_n$ , $\widehat{\mathcal{G}_n}$)}}& \multicolumn{2}{c|}{\bf Alg.~1.3~{($\mathcal{F}_n^{+}$)}}& \multicolumn{2}{c|}{\bf Alg.~2.2~{($\mathcal{F}_n^{+}$ , $\widehat{\mathcal{F}_n^{+}}$)}}&\multicolumn{2}{c|}{\bf Alg.~2.3~{($\mathcal{F}_n^{\pm}$ , $\widehat{\mathcal{F}_n^{\pm}}$)}}&\multicolumn{2}{c|}{\bf Alg.~1.4~{($\mathcal{S}_n^+$ + $\mathcal{N}_n$)}}\\
\cline{2-15}
\rule[0mm]{0mm}{5mm} $\gamma$&	\multicolumn{1}{c|}{Iter.}&	\multicolumn{1}{c|}{Time(s)}&	\multicolumn{1}{c|}{Iter.}& \multicolumn{1}{c|}{Time(s)}& \multicolumn{1}{c|}{Iter.}&	\multicolumn{1}{c|}{Time(s)}&	\multicolumn{1}{c|}{Iter.}&	\multicolumn{1}{c|}{Time(s)}&	\multicolumn{1}{c|}{Iter.}&	\multicolumn{1}{c|}{Time(s)} &	\multicolumn{1}{c|}{Iter.}&	\multicolumn{1}{c|}{Time(s)}&	\multicolumn{1}{c|}{Iter.}&\multicolumn{1}{c|}{Time(s)}\\
\hline
\hline
2.0&		2111& 0.483&		2111& 0.512&				2111&8.500&					2111&13.782&					2111&26.713&					2111&49.377&					2111&2468.863\\
2.1&		2111& 0.483&		2111& 0.504&				2111&8.561&					2111&13.682&					2111&26.726&					2111&49.184&					2111&2466.339\\
2.2&		2111& 0.482& 	2111& 0.500&				2111&8.495&					2111&13.585&					2111&26.273&					2111&48.592&					2111&2471.254\\
2.3&		2111& 0.483&		2111& 0.502&				2111&8.565&					2111&13.520&					2111&26.261&					2111&48.528&					2111&2461.198\\
2.4&		2111& 0.489&		2111& 0.501&				2111&8.573&					2111&13.315&					2111&25.846&					2111&47.172&					2111&2464.471\\
2.5&		2111& 0.485&		2111& 0.502&				2111&8.558&					2111&13.344&					2111&26.037&					2111&47.616&					2111&2467.267\\
2.6&		2111& 0.481&		2111& 0.501&				2111&8.522&					2111&13.304&					2111&25.958&					2111&47.223&					2111&2464.474\\
2.7&		4097& 1.136&		4097& 1.173&				4097&16.648&					4097&25.753&					4097&49.782&					4097&90.116&					4097&4554.881\\
2.8&		4097& 1.136&		4097& 1.171&				4097&16.752&					4097&25.655&					4097&49.553&					4097&89.579&					4097&4559.754\\
2.9&		4097& 1.134&		4097& 1.168&				4097&16.779&					4097&25.643&					4097&49.263&					4097&89.391&					4097&4557.203\\
3.0&		4097& 1.134&		4097& 1.167&				4097&16.652&					4097&25.550&					4097&48.423&					4097&87.842&					4097&4551.061\\
3.1&		4097& 1.136&		4097& 1.166&				4097&16.751&					4097&25.484&					4097&48.789&					4097&87.737&					4097&4555.928\\
3.2&		4097& 1.138&		4097& 1.175&				4097&16.734&					4097&25.422&					4097&48.855&					4091&87.169&					4090&4548.440\\
3.3&		4097& 1.131&		4095& 1.167&				4097&16.784&					4097&25.274&					4091&48.418&					4089&86.574&					4085&4522.061\\
3.4&		4097& 1.131&		4087& 1.161&				4097&16.969&					4097&25.179&					4091&48.164&					4085&86.034&					4085&4521.749\\
3.5&		4097& 1.139&		4085& 1.164&				4097&16.731&					4087&25.122&					4087&48.091&					4085&85.644&					4085&4539.213\\
3.6&		4097& 1.137&		4085& 1.161&				4091&16.721&					4087&24.920&					4087&47.638&					4085&84.635&					4085&4533.101\\
3.7&		4097& 1.140&		4085& 1.158&				4091&16.755&					4087&24.834&					4085&47.310&					4085&84.454&					4031&4384.409\\
3.8&		4097& 1.133&		4084& 1.162&				4089& 17.128& 				4087&24.831&					4085& 48.094&				4075&85.390& 				4023&4853.335\\
3.9&		4097& 1.137&		4080& 1.187&				4089& 17.144& 				4081&24.719&					4079& 47.219&				4051&84.028& 				4023&5004.118\\
4.0&	 	-&-&			-&	 -&					-& -& 						-& -&						-& - &						-&-& 						-&-\\
4.1&		-&-&			-&	 -&					-& -& 						-& -&						-&- &						827717&18054.273& 			4013&5589.341\\
4.2&		-&-&			-&	 -&					-& -& 						-& -& 						899627&14932.525 &			296637&5093.561&				345&528.262\\
4.3&		-&-&			-&	 -&					-& -& 						1024493& 15985.310&			469665& 6007.219 &			102211&1559.054&				39&50.717\\
4.4&		-&-&			1467851& 16744.884&		-& -& 						592539& 6657.898&			147363& 1361.774 &			36937&545.801&				21&26.4293\\
4.5&		-&-&			1125035& 9820.911&		-& -& 						354083& 3066.114&			66819& 559.987&				14533&213.376&				17&20.961\\
4.6&		-&-&			762931&	5680.756&		1107483& 14991.047&			213485&	1506.465&			25675& 206.767&				4603&69.503&					7&8.768\\
4.7&		-&-&			610071&	4319.490&		793739&	8137.410&			125747&	768.224&				22119& 180.072&				1957&30.490&					3&3.809\\
4.8&		-&-&			569661&	3799.361&		473137&	3413.271&			69887& 386.279&				20997& 176.279&				645&10.347&					3&4.051\\
4.9& 	-&-&			407201&	1834.912&		232295&	1231.091&			39091& 207.440& 				1969& 16.716& 				109&1.889& 					1&1.221\\
5.0&		-&-&			305627&	974.829&			190185&	859.674&				21283& 112.276&				1213& 10.501&				1&0.014&						1&1.189\\
5.1&		-&-&			206949&	415.090&			34641&	113.631&				12165& 64.742&				219& 2.000&					1&0.013&						1&1.150\\
5.2&		-&-&			141383&	172.541&			1& 0.004&					1& 0.008&					1& 0.008&					1&0.012&						1&1.120\\
5.3&		-&-&			110641&	101.475&			1& 0.003&					1& 0.008&					1& 0.007&					1&0.012&						1&1.040\\
5.4&		-&-&			90877& 67.681&			1& 0.003&					1& 0.008&					1& 0.008&					1&0.012&						1&1.078\\
5.5&		-&-&			44731& 14.292&			1& 0.003&					1& 0.007&					1& 0.007& 					1&0.011&						1&1.100\\
5.6&		-&-&			26171& 5.910&			1& 0.004&					1& 0.007&					1& 0.007&					1&0.012&						1&1.000\\
5.7&		-&-&			15045& 2.775&			1& 0.004&					1& 0.008&					1& 0.008&					1&0.012&						1&1.057\\
5.8&		-&-&			10239& 1.705&			1& 0.003&					1& 0.007&					1& 0.007&					1&0.012&						1&1.063\\
5.9&		-&-&			6977&	 1.042&			1& 0.003&					1& 0.007&					1& 0.007&					1&0.011&						1&1.051\\
6.0&		-&-&			4717&	 0.654&			1& 0.006&					1& 0.007&					1& 0.008&					1&0.012&						1&1.119\\
\hline
\hline
\end{tabular}
}
\end{center}
\end{table}

\begin{figure}[h]
\begin{center}
\label{fig:graph G12v}
\includegraphics[width=200mm]{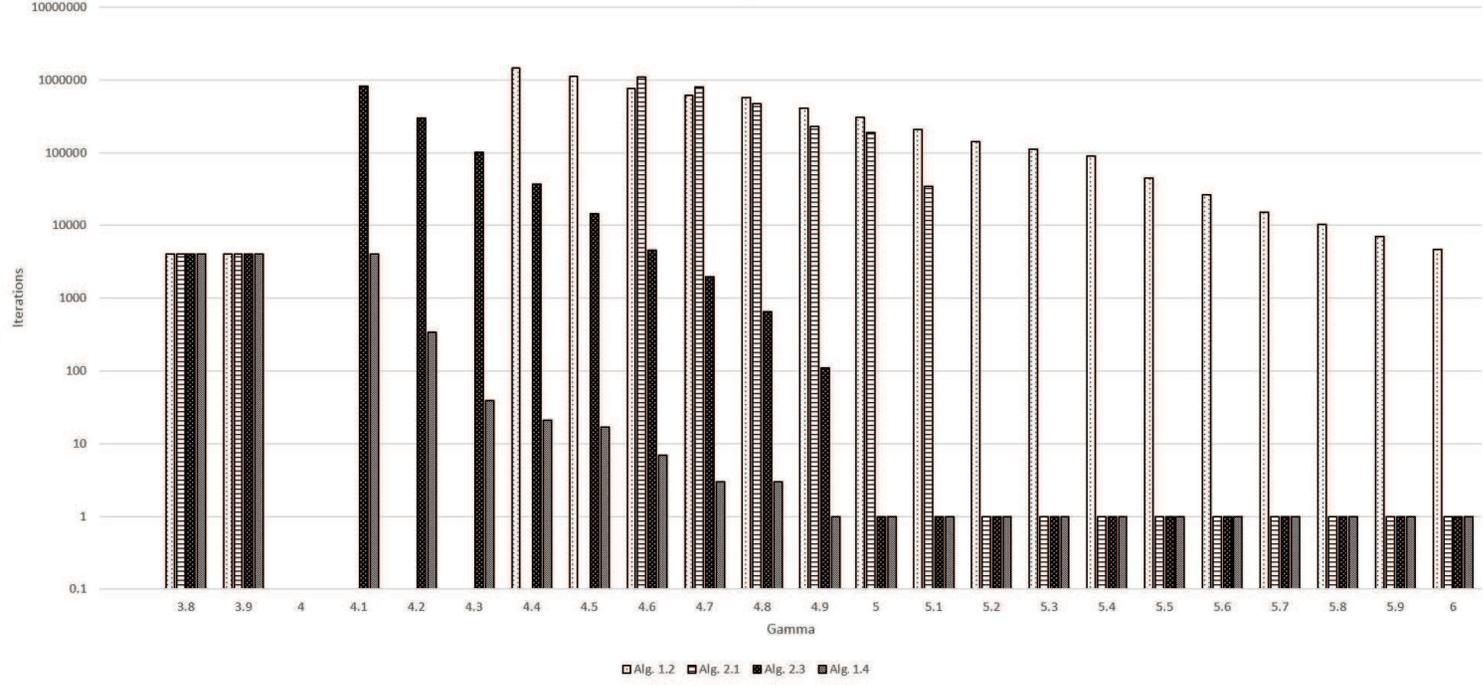}
\end{center}
\caption{Graph of Table \ref{tab:G12}: Iterations vs. $\gamma$ of {\bf Algorithms 1.2, 2.1, 2.3 and 21.4} for the graph $G_{12}$.}
\label{fig:graph G12}
\end{figure}

\begin{figure}[h]
\begin{center}
\label{fig:graph G12v}
\includegraphics[width=200mm]{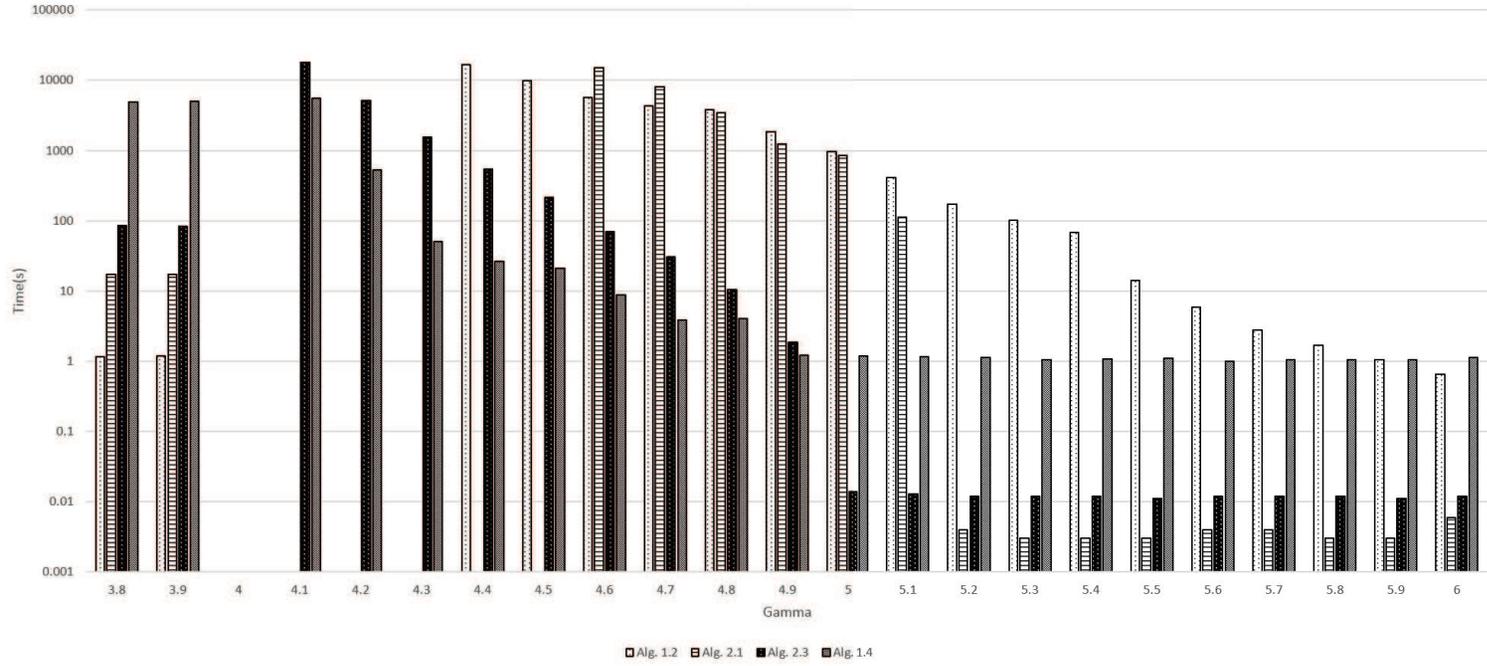}
\end{center}
\caption{Graph of Table \ref{tab:G12}: Time(s) vs.  $\gamma$ of {\bf Algorithms 1.2, 2.1, 2.3 and 21.4} for the graph $G_{12}$.}
\label{fig:graph G12}
\end{figure}

\begin{table}[h]
\caption{Results for $C_{\gamma}$ with $n=10$}
\label{tab:n10}
\begin{center}
{\fontsize{5pt}{4.6pt}\selectfont
\begin{tabular}{|c||r|r|r|r|r|r|r|r|r|r|r|r|r|r|}
\hline
\hline
\rule[0mm]{0mm}{5mm}&	\multicolumn{2}{c|}{\bf Alg.~1.1~{$(\mathcal{N}_n)$}} &	\multicolumn{2}{c|}{\bf Alg.~1.2~{($\mathcal{H}_n$)}}&	\multicolumn{2}{c|}{\bf Alg.~2.1~{($\mathcal{G}_n$ , $\widehat{\mathcal{G}_n}$)}}& \multicolumn{2}{c|}{\bf Alg.~1.3~{($\mathcal{F}_n^{+}$)}}& \multicolumn{2}{c|}{\bf Alg.~2.2~{($\mathcal{F}_n^{+}$ , $\widehat{\mathcal{F}_n^{+}}$)}}&\multicolumn{2}{c|}{\bf Alg.~2.3~{($\mathcal{F}_n^{\pm}$ , $\widehat{\mathcal{F}_n^{\pm}}$)}}&\multicolumn{2}{c|}{\bf Alg.~1.4~{($\mathcal{S}_n^+$ + $\mathcal{N}_n$)}}\\
\cline{2-15}
\rule[0mm]{0mm}{5mm} $\gamma$&	\multicolumn{1}{c|}{Iter.}&	\multicolumn{1}{c|}{Time(s)}&	\multicolumn{1}{c|}{Iter.}& \multicolumn{1}{c|}{Time(s)}& \multicolumn{1}{c|}{Iter.}&	\multicolumn{1}{c|}{Time(s)}&	\multicolumn{1}{c|}{Iter.}&	\multicolumn{1}{c|}{Time(s)}&	\multicolumn{1}{c|}{Iter.}&	\multicolumn{1}{c|}{Time(s)} &	\multicolumn{1}{c|}{Iter.}&	\multicolumn{1}{c|}{Time(s)}&	\multicolumn{1}{c|}{Iter.}&\multicolumn{1}{c|}{Time(s)}\\
\hline
\hline
-5.00000&		 		1&0.001&			1&	 0.001&					1& 0.009& 						1& 0.009&						1& 0.016&						1&0.025& 						1&0.501\\
-7.50000&	 		1&0.001&			1&	 0.001&					1& 0.010& 						1& 0.009&						1& 0.016&						1&0.025& 						1&0.491\\
-8.75000&	 		1&0.001&			1&	 0.001&					1& 0.010& 						1& 0.008&						1& 0.016&						1&0.025& 						1&0.481\\
-9.37500&	 		2&.001&			2&	 0.001&					2& 0.020& 						2& 0.018&						2& 0.032&						2&0.048& 						2&0.946\\
-9.68750&		 	3&0.001&			3&	 0.001&					3& 0.021& 						3& 0.021&						3& 0.041&						3&0.066& 						3&1.621\\
-9.84375&	 	799&0.267&		799& 0.259&					799& 4.993& 						799& 4.815&						799& 9.253&						799&15.329& 						799&441.790\\
-10.00000&	 		-&-&			-&	 -&						-& -& 							-& -&							-& - &							-&-& 							-&-\\
-10.15625&	 	84828&81.527&	1& 0.001&					1& 0.029& 						1& 0.007&						1& 0.011&						1&0.012& 						1&0.424\\
-10.31250&	 	39790&17.030&	1& 0.001&					1& 0.004& 						1& 0.008&						1& 0.011&						1&0.012& 						1&0.362\\
-10.62500&	 		5546&1.210&		1& 0.001&					1& 0.005& 						1& 0.008&						1& 0.011&						1&0.012& 						1&0.380\\
-11.25000&	 		8&0.007&			1& 0.001&					1& 0.005& 						1& 0.007&						1& 0.011&						1&0.012& 						1&0.363\\
-12.50000&	 		2&0.001&			1& 0.001&					1& 0.005& 						1& 0.007&						1& 0.011&						1&0.012& 						1&0.367\\
-15.00000&	 		2&0.001&			1& 0.001&					1& 0.005& 						1& 0.008&						1& 0.011&						1&0.012& 						1&0.384\\
\hline
\hline
\end{tabular}
}
\end{center}
\end{table}


\begin{table}[h]
\caption{Results for $C_{\gamma}$ with $n=20$}
\label{tab:n20}
\begin{center}
{\fontsize{5pt}{4.6pt}\selectfont
\begin{tabular}{|c||r|r|r|r|r|r|r|r|r|r|r|r|r|r|}
\hline
\hline
\rule[0mm]{0mm}{5mm}&	\multicolumn{2}{c|}{\bf Alg.~1.1~{$(\mathcal{N}_n)$}} &	\multicolumn{2}{c|}{\bf Alg.~1.2~{($\mathcal{H}_n$)}}&	\multicolumn{2}{c|}{\bf Alg.~2.1~{($\mathcal{G}_n$ , $\widehat{\mathcal{G}_n}$)}}& \multicolumn{2}{c|}{\bf Alg.~1.3~{($\mathcal{F}_n^{+}$)}}& \multicolumn{2}{c|}{\bf Alg.~2.2~{($\mathcal{F}_n^{+}$ , $\widehat{\mathcal{F}_n^{+}}$)}}&\multicolumn{2}{c|}{\bf Alg.~2.3~{($\mathcal{F}_n^{\pm}$ , $\widehat{\mathcal{F}_n^{\pm}}$)}}&\multicolumn{2}{c|}{\bf Alg.~1.4~{($\mathcal{S}_n^+$ + $\mathcal{N}_n$)}}\\
\cline{2-15}
\rule[0mm]{0mm}{5mm} $\gamma$&	\multicolumn{1}{c|}{Iter.}&	\multicolumn{1}{c|}{Time(s)}&	\multicolumn{1}{c|}{Iter.}& \multicolumn{1}{c|}{Time(s)}& \multicolumn{1}{c|}{Iter.}&	\multicolumn{1}{c|}{Time(s)}&	\multicolumn{1}{c|}{Iter.}&	\multicolumn{1}{c|}{Time(s)}&	\multicolumn{1}{c|}{Iter.}&	\multicolumn{1}{c|}{Time(s)} &	\multicolumn{1}{c|}{Iter.}&	\multicolumn{1}{c|}{Time(s)}&	\multicolumn{1}{c|}{Iter.}&\multicolumn{1}{c|}{Time(s)}\\
\hline
\hline
-5.00000&		 	1&0.002&			1&	 0.002&				1& 0.042& 					1& 0.095&					1& 0.181&					1&0.182& 					1&13.812\\
-7.50000&	 		1&0.002&			1&	 0.002&				1& 0.045& 					1& 0.093&					1& 0.183&					1&0.180& 					1&14.266\\
-8.75000&	 		2&0.005&			2&	 0.005&				2& 0.046& 					2& 0.191&					2& 0.368&					2& 0.378& 					2&25.224\\
-9.37500&	 		128&0.091&		128&	0.096&			128& 2.490& 					128& 11.755&					128& 22.660&					128& 22.682& 				128&1598.031\\
-9.68750&		 	-&-&			-&	 -&					-& -& 						-& -&						-& - &						-&-& 						-&-\\
-9.84375&	 		-&-&			-&	 -&					-& -& 						-& -&						-& - &						-&-& 						-&-\\
-10.00000&	 		-&-&			-&	 -&					-& -& 						-& -&						-& - &						-&-& 						-&-\\
-10.15625&	 		-&-&			-&	 -&					-& -& 						1& 0.080&					1& 0.080&					1& 0.076& 					1&13.912\\
-10.31250&	 		-&-&			-&	 -&					1& 0.010& 					1& 0.072&					1& 0.072&					1& 0.073& 					1&14.015\\
-10.62500&	 		-&-&			2&	 0.001&				1& 0.009& 					1& 0.072&					1& 0.069&					1& 0.073& 					1&14.135\\
-11.25000&	 		-&-&			2&	 0.001&				1& 0.009& 					1& 0.067&					1& 0.067&					1& 0.071& 					1&12.126\\
-12.50000&	 		3187&1.370&		2&	 0.001&				1& 0.010& 					1& 0.068&					1& 0.066&					1& 0.067& 					1&11.128\\
-15.00000&	 		2&0.001&			2&	 0.001&				1& 0.010& 					1& 0.069&					1& 0.080&					1& 0.068& 					1&9.580\\
\hline
\hline
\end{tabular}
}
\end{center}
\end{table}
\end{landscape}
\end{document}